\documentclass[11pt]{amsart}
\usepackage{amsmath,amssymb,amsfonts}
\usepackage[hidelinks]{hyperref}
\makeatletter
\def\l@section{\@tocline{1}{5pt plus 1pt}{0pt}{2.8pc}{\bfseries}}
\def\l@subsection{\@tocline{2}{1pt plus .5pt}{2.2pc}{4.8pc}{\small}}

\def\subsection{\@startsection{subsection}{2}%
  \z@{.5\linespacing\@plus.7\linespacing}{.3\linespacing}%
  {\normalfont\bfseries}}
\makeatother

\newtheorem{thm}{Theorem}[section]
\newtheorem{cor}[thm]{Corollary}
\newtheorem{lem}[thm]{Lemma}
\newtheorem{prop}[thm]{Proposition}
\theoremstyle{definition}
\newtheorem{defn}[thm]{Definition}
\theoremstyle{remark}
\newtheorem{rem}[thm]{Remark}
\numberwithin{equation}{section}

\begin{document}

\title[Structural stability of $\mathcal J_d$]{Structural stability of the Jouanolou foliations in every
degree}
\keywords{Jouanolou foliation, structural stability, homogeneous potential, holomorphic foliation,
Fatou set, Julia set}
\subjclass[2020]{Primary 37F75, 32S65; Secondary 32M25}
\author{Sahil Gehlawat}
\address{Department of Mathematics, Indian Institute of Technology Jodhpur, NH 65, Jodhpur, Rajasthan 342030, India}
\email{sahilg@iitj.ac.in}

\begin{abstract}
We show that the degree $d$ Jouanolou foliation $\mathcal J_d$ on $\mathbb P^2$ is structurally
stable for every $d \geq 2$. 
We prove the existence of a neighbourhood $\mathcal U_d$ of
$\mathcal J_d$, such that every foliation $\mathcal F \in \mathcal U_d$ satisfies: the Fatou set is a connected disc bundle over a compact Riemann surface of genus
$d(d+1)/2$, no leaf is dense in $\mathbb P^2$, and all but countably many regular leaves are biholomorphic to the
unit disc. Finally, we prove transverse perfectness of the Julia set on the regular locus for $d \geq 2$.
\end{abstract}

\maketitle

\tableofcontents

\section{Introduction}

The global dynamics of holomorphic foliations on the complex projective plane has historically been
shaped more by phenomena of \emph{rigidity} and \emph{bifurcation} than by structural stability.
Beginning with the work of Huda\u{i}-Verenov \cite{Khu62} and Il'yashenko \cite{Ily78a,Ily78b},
classical results established strong density phenomena for generic polynomial foliations preserving
the line at infinity. Subsequent works of Cerveau \cite{Cer83}, Shcherbakov \cite{Shc82} and Nakai \cite{Nak94} clarified
the local dynamics and rigidity of nonsolvable holonomy pseudogroups. In another direction, Loray
and Rebelo \cite{LR03} constructed nonempty open sets of projective foliations in $\mathbb P^n$ with
rich dynamics, including minimality, ergodicity and rigidity phenomena. Thus a substantial part of
the classical theory suggested a picture in which complicated transverse dynamics leads naturally to
rigidity and, consequently, to bifurcation in parameter space.

It is important here to distinguish \emph{rigidity} from \emph{structural stability}. Roughly
speaking, rigidity asserts that if a nearby foliation happens to be topologically conjugate to a
given foliation, then the conjugacy is forced to come from a much more rigid analytic or projective
equivalence. Structural stability asks for a quite different conclusion: every sufficiently small
perturbation must be topologically conjugate to the original foliation. Since a projective
equivalence class occupies only a small part of the full parameter space, the rigidity mechanisms
appearing in the works above typically lead to bifurcation rather than to an open stability
component. This contrast is emphasized in the recent survey of Deroin \cite{Der26}.

We use the standard coefficient topology on the space $\operatorname{Fol}(\mathbb P^2,d)$ of
degree-$d$ foliations. Concretely, one may represent a foliation by a non-zero homogeneous
degree-$d$ vector field on $\mathbb C^3$, modulo multiplication by a non-zero scalar and addition of
a homogeneous multiple of the radial field; the quotient of the coefficient topology gives the usual
topology on $\operatorname{Fol}(\mathbb P^2,d)$. A foliation is called \emph{structurally stable} if
every sufficiently nearby foliation is carried to it by a homeomorphism of $\mathbb P^2$ which sends
singular points to singular points and regular leaves to regular leaves.

Against this background, the theorem of Alvarez and Deroin \cite{AD25} is particularly striking.
They introduced two properties $P_B$ and $P_S$, and proved that any foliation on $\mathbb P^2$
satisfying these properties is structurally stable. In particular, they proved that the degree-two
Jouanolou foliation $\mathcal J_2$ is structurally stable. More precisely, they exhibited a
nontrivial open stability component in the space of degree-two foliations and showed that the Fatou
set of $\mathcal J_2$ is a holomorphic fibration over the Klein quartic, smoothly a locally trivial
disc bundle. In particular, $\mathcal J_2$ has no dense leaf. Thus its dynamics lie in a very
different regime from the minimal and rigid dynamics that had played such a prominent role in the
preceding theory.

For $d \geq 1$, let $\mathcal J_d$ denote the Jouanolou foliation of $\mathbb P^2$ induced by the
homogeneous vector field
\begin{equation}\label{eq:Jd}
J_d = y^d\frac{\partial}{\partial x}+z^d\frac{\partial}{\partial y}+x^d\frac{\partial}{\partial z},
\end{equation}
on $\mathbb C^3$. Equivalently, this is the cyclic system
\[
\dot x = y^d,\qquad \dot y = z^d,\qquad \dot z = x^d.
\]
For $d \geq 2$, the Jouanolou foliations are classical explicit examples of projective foliations
without invariant algebraic curves; see Jouanolou \cite{Jou79}, Lins Neto \cite{Lin88}, and Brunella
\cite{Bru15}. The linear foliation $\mathcal J_1$ is exceptional in this respect and has three
invariant projective lines. The absence of algebraic leaves for $d \geq 2$ is especially significant
in view of the rigidity picture above. As stressed by Deroin \cite{Der26}, the results initiated by
Il'yashenko suggest that one should look far from foliations having invariant algebraic leaves in
order to find genuinely structurally stable projective foliations.

The Alvarez--Deroin theorem therefore raises a natural question: is the degree-two phenomenon
exceptional, or does it reflect an all-degree stability mechanism for the Jouanolou family? In
Section~4 of his ICM survey \cite{Der26}, Deroin formulates the structural-stability problem as:
\[
\mathcal J_d\quad\text{is structurally stable for every }d \geq 1.
\]
The degree-one case is verified there by the stated linear criterion and the diagonal form of
$\mathcal J_1$; see \cite[p.~110]{Der26}. Our main result proves the remaining cases.

\begin{thm}\label{thm:structural-stability}
For every integer $d \geq 2$, the Jouanolou foliation $\mathcal J_d$ is structurally stable in
$\operatorname{Fol}(\mathbb P^2,d)$.
\end{thm}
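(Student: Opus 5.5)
The plan is to reduce Theorem~\ref{thm:structural-stability} to the Alvarez--Deroin criterion \cite{AD25}: a foliation of $\mathbb P^2$ satisfying properties $P_B$ and $P_S$ is structurally stable. So it suffices to verify $P_S$ and $P_B$ for $\mathcal J_d$, $d\ge 2$. The role of $d=2$ in \cite{AD25} is only that of a worked example, so the task is to redo their verification uniformly in $d$.

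\emph{Step 1: singular points.} The singularities of $\mathcal J_d$ are the points $[x:y:z]$ at which $J_d$ is proportional to the radial field, i.e. $(y^d,z^d,x^d)=\lambda(x,y,z)$. By the $\mathbb Z/3$ cyclic symmetry and the diagonal symmetries $(x,y,z)\mapsto(\zeta^{a}x,\zeta^{b}y,\zeta^{c}z)$ with suitable roots of unity, these form a single orbit of $d^2+d+1$ points. I will compute the linear part at one representative, say $[1:1:1]$. The eigenvalue ratio there comes from the characteristic polynomial of the cyclic matrix $dP-I$, where $P$ is the cyclic permutation matrix, restricted to the quotient by the radial direction. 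This gives eigenvalues proportional to $d\omega-1$ and $d\omega^2-1$, where $\omega=e^{2\pi i/3}$. Their quotient is non-real for $d\ge2$, so every singularity is hyperbolic, hence linearizable and stable under perturbation. Hyperbolicity with these eigenvalues is what I expect $P_S$ to require (possibly together with a non-degeneracy condition on the separatrices), and it will persist on a neighbourhood $\mathcal U_d$.

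\emph{Step 2: property $P_B$.} This is the hard part: roughly, the foliation must admit a uniformly hyperbolic (expanding) transverse structure on the Julia set, or equivalently a Brody-type/leafwise negative curvature condition with no Fatou component degeneration. Following \cite{AD25}, I would construct a compact Riemann surface $\Sigma\subset\mathbb P^2$ transverse to $\mathcal J_d$ whose genus matches the abstract, $d(d+1)/2$, i.e. that of a smooth plane curve of degree $d+2$. Natural candidates are the Fermat-type invariant curves of the symmetry group, e.g. $x^{d+1}y+y^{d+1}z+z^{d+1}x=0$, which generalizes the Klein quartic. I would then show that the holonomy-free neighbourhood of $\Sigma$ fibres over $\Sigma$ by leaf-discs, and that on the complement the leafwise Poincaré metric has uniformly controlled transverse expansion. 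The idea is to use a homogeneous potential: a Lyapunov function $\varphi$ built from $|x|^{2}$-type homogeneous terms, whose leafwise Laplacian has a definite sign off a neighbourhood of $\Sigma$.

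\emph{Step 3 and main obstacle.} The verification of transversality of $\Sigma$ and positivity of the potential's leafwise Levi form for all $d$ simultaneously is, I expect, the main obstacle. For $d=2$ Alvarez--Deroin could rely on explicit computation and the large symmetry group of the Klein quartic, while here the estimates must be done with $d$ as a parameter. I would try to exploit the symmetry group of order $3(d^2+d+1)$ to reduce to a fundamental domain, and prove an inequality in the homogeneous coordinates $|x|,|y|,|z|$ by an AM-GM type argument. Once $P_B$ and $P_S$ are established for $\mathcal J_d$, openness of both properties (they are defined by strict inequalities and hyperbolicity conditions) gives them on $\mathcal U_d$. The Alvarez--Deroin theorem then yields the conjugating homeomorphisms, proving the theorem.
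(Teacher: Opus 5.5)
Your Step~1 is correct: $d\omega-1$ and $d\omega^2-1$ are exactly the eigenvalues $-\frac{d+2}{2}\pm i\frac{\sqrt3\,d}{2}$ at $[1:1:1]$, and the diagonal symmetries carry them to all $d^2+d+1$ singularities. However, $P_S$ also requires each singularity to be a \emph{source} of the real leafwise field $W$. This takes one line: at $[1:1:1]$ one has $\lambda=1$, so \eqref{eq:sing-rho} makes $\rho$ negative real, and $\rho\mu_\pm$ has positive real part. You should still state it.

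The genuine gap is Step~2. $P_B$ is not a hyperbolicity or curvature condition on the Julia set. Leafwise negative curvature holds for every degree $d\ge2$ foliation, since $T\mathcal F\simeq\mathcal O(1-d)$, so it cannot be what singles out $\mathcal J_d$. $P_B$ is a pointwise second-order condition on a balanced homogeneous potential $H$, equivalently on a Hermitian metric on $N\mathcal J_d\simeq\mathcal O(d+2)$. At every zero of the leafwise derivative $H_t$ one needs $H_{t\bar t}>|H_{tt}|$, so that the infinitesimal transverse distance has a non-degenerate local maximum along the leaf. The transverse surface is then \emph{defined} as $B=\pi(\{H_t=0\})$, a real surface whose transversality follows from that inequality.

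Your substitute, a compact Riemann surface $\Sigma\subset\mathbb P^2$ transverse to $\mathcal J_d$ such as $x^{d+1}y+y^{d+1}z+z^{d+1}x=0$, cannot exist. By Chow, $\Sigma=\{F=0\}$ is algebraic of some degree $k$. By B\'ezout it meets the curve $\{dF(J_d)=0\}$ of degree $k+d-1$, and at those points $\Sigma$ is tangent to the foliation or passes through a singular point. If $dF(J_d)$ vanished identically on $\Sigma$, then $\Sigma$ would be invariant. Consistently, the class of $B$ in $H_2(\mathbb P^2;\mathbb Z)$ is $(1-d)$ times a line, which is negative, so $B$ is not even homologous to a complex curve. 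Its genus $d(d+1)/2$ refers to the transverse complex structure. Your curve may be the right \emph{abstract} model of the Fatou quotient (for $d=2$ it is the Klein quartic, and for $d>2$ the paper leaves this open), but it is not a transverse curve in $\mathbb P^2$.

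What your plan lacks is a mechanism that actually produces such an $H$ for every $d$. Using $|x|^2$-type terms and an AM--GM inequality on a fundamental domain does not address the difficulty. With the Euclidean potential, even $d=2$ needed a computer-checked reduction in Alvarez--Deroin, and for larger $d$ only numerical evidence is available.

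The paper's key idea is the exponent $p=d+1$. For $F_d=\sum|z_i|^{d+1}$ one has $(F_d)_t=\frac{d+1}{2}\sum a_i$ with $|a_i|=(|z_i||z_{i+1}|)^d$. So the critical points are triangle closures $a_1+a_2+a_3=0$, and the Hessian ratio becomes $|d(U+2C)-U|/((d+1)Q)$ with weights $w_i=t_{i+1}^{(d+1)/d}$. The weighted inequality $|U+2C|<Q$ is proved by writing each $w_i$ as a layer-cake integral of clipped weights $\min\{t_{i+1}^2,h\}$ and checking the clipped weight vectors by hand. The paper then repairs the degenerate coordinate directions by adding $\varepsilon S^{(d+1)/2}$, with an explicit estimate in fixed axis neighbourhoods. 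It smooths across the coordinate hyperplanes with the parameter $\eta$ and keeps the strict gap by compactness.

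Finally, since $H$ is not Euclidean, the paper does not quote Alvarez--Deroin verbatim. It checks strict Levi positivity of $\log H$ off the radial direction, which gives negative leafwise divergence of $W$. It uses the Bott-parallel normal frame of norm $N^{-(d+2)}$ for transverse expansion off the sink. With these it reruns their sink-basin, contraction and expansion steps. Your Lyapunov function with signed leafwise Laplacian only touches the Levi-positivity part, which is the easy one. The Hessian inequality at the critical points is the heart of the proof, and it is absent from your plan.
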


Together with the degree-one verification of Deroin,
Theorem~\ref{thm:structural-stability} gives structural stability for $\mathcal J_d$ in every
degree. For $d = 2$, the structural-stability conclusion recovers Alvarez--Deroin \cite{AD25}; the
new
metric construction is required in higher degree. For $d \geq 2$, the same construction gives a more
detailed description of the dynamics on a whole neighbourhood of the Jouanolou foliation.

\begin{thm}\label{thm:dynamics-near-jd}
For every $d \geq 2$, there exists an open neighbourhood
$\mathcal U_d\subset\operatorname{Fol}(\mathbb P^2,d)$ of $\mathcal J_d$ such that every foliation
$\mathcal F\in\mathcal U_d$ has the following properties.
\begin{enumerate}
\item The Fatou set $\mathrm F(\mathcal F)$ is a non-empty connected open set which is a smooth
locally trivial disc bundle
\[
\mathrm F(\mathcal F)\longrightarrow B_{\mathcal F},
\]
over a compact Riemann surface $B_{\mathcal F}$ of genus $d(d+1)/2$. The bundle projection is
holomorphic for the transverse complex structure.
\item No leaf of $\mathcal F$ is dense in $\mathbb P^2$.
\item All but countably many regular leaves of $\mathcal F$ are biholomorphic to the unit disc. In
particular, the Anosov conjecture holds throughout the neighbourhood $\mathcal U_d$.
\end{enumerate}
\end{thm}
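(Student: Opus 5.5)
The plan is to follow the Alvarez--Deroin strategy: verify that every foliation near $\mathcal J_d$ satisfies analogues of the properties $P_B$ and $P_S$, and read off the three conclusions from the structure those properties force. The new ingredient in higher degree is a transverse metric built from a homogeneous potential. Concretely, I would look for a plurisubharmonic function $\phi$ on $\mathbb C^3$, homogeneous of degree $2$ and of the form $\phi = \log(|x|^{a}+|y|^{a}+|z|^{a})$ plus a correction adapted to the cyclic symmetry. The goal is that the Levi form $i\partial\bar\partial\phi$, restricted to the planes spanned by the radial field $R$ and $J_d$, is positive outside a small neighbourhood $K$ of the singular set and of a compact ``Julia'' region. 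Taking the quotient by $R$ then gives a transverse hermitian metric whose curvature along leaves has a definite sign there.

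\textbf{Step 1 (properties at $\mathcal J_d$).} First list the singularities of $\mathcal J_d$. There are $d^2+d+1$ of them, all hyperbolic, with the $\mathbb Z/(d^2+d+1)\rtimes\mathbb Z/3$ symmetry. Next, show that the transverse metric above has leafwise negative curvature on the complement of a tubular region around a real-analytic Levi-flat type set, which is the Julia set. In the Fatou set, the potential then produces, for each leaf, a holomorphic function, namely the transverse ``period''. Its level sets are the fibres of a map to a quotient curve $B$.

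\textbf{Step 2 (openness).} Because the estimates are strict inequalities on compact sets and the singularities are hyperbolic, hence persistent, they survive a $C^0$-small perturbation of the coefficients. This is what yields $\mathcal U_d$. The main obstacle, I expect, is precisely the choice of the potential $\phi$ and the verification of positivity of the relevant Hermitian form uniformly in $d$. For $d=2$ Alvarez--Deroin could check it by hand. In general I would exploit homogeneity and symmetry to reduce the check to a one-variable inequality on a fundamental domain of the symmetry group, and then bound the error terms by $d$-dependent constants.

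\textbf{Step 3 (consequences).} First, the bundle structure: the Fatou set fibres over $B_{\mathcal F}$ with disc fibres, each fibre a leaf closure piece, via the holomorphic transverse projection. Connectedness follows from the connectedness of the complement of the Julia region. The genus is computed by Riemann--Hurwitz or by an Euler characteristic count, $\chi(\mathbb P^2)=3$ against the singular points, giving $2-2g = 2-d(d+1)$, i.e.\ $g = d(d+1)/2$. This genus is the same as that of a smooth plane curve of degree $d+2$; in particular it gives $3$, the Klein quartic, for $d=2$. Second, no leaf is dense, since each leaf meeting the Fatou set is a fibre and leaves in the Julia set avoid the open Fatou set. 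Third, the hyperbolicity statement: leaves in the Fatou set are discs because they are fibres. Leaves contained in the Julia set carry the negatively curved leafwise metric, so they are hyperbolic. By the Alvarez--Deroin type argument they are simply connected except for countably many leaves carrying holonomy, namely those through the separatrices of singular points and the leaves with nontrivial loops, which form a countable set. Finally, structural stability follows from the $P_B$, $P_S$ theorem once these properties are checked on $\mathcal U_d$.
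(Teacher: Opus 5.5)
Your overall architecture (a new homogeneous potential, then $P_B$, $P_S$, openness, then the Alvarez--Deroin consequences) matches the paper's. The gap is that the property you single out is not the one that drives the argument, and the one that does is missing.

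Positivity of the Levi form of a log-homogeneous potential in the direction of $J_d$ modulo $R$ only makes $f=-\log H$ strictly superharmonic along leaves ($f_{t\bar t}<0$). This already holds for $\log S$ at every non-singular point, needs no excised ``Julia region'', and still allows leafwise saddles. What is needed is $P_B(H)$: at every zero of the leafwise derivative $H_t$ one must have $H_{t\bar t}>|H_{tt}|$, i.e.\ every leafwise critical point of $f$ is a non-degenerate maximum. This is a domination of the $(2,0)$ part of the leafwise Hessian by its $(1,1)$ part, not positivity of a Hermitian form. It is what makes $B=\pi(\{H_t=0\})$ a smooth real surface transverse to $\mathcal F$ and an attracting zero set of the real field $W$. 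The Fatou set is then the basin $\operatorname{Att}_W(B)$, each of its leaves meets $B$ exactly once, and $B_{\mathcal F}$ is $B$ itself with its transverse complex structure.

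Your ``transverse period'' has no counterpart. Your description of the Julia set as a real-analytic Levi-flat set is unsupported: it is only known to be transversally perfect, and conjecturally it is transversally a Cantor set.

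The real higher-degree difficulty is proving $P_B$. For the Euclidean potential it is available only for $d=2$, via a computer-checked reduction, not by hand. The paper takes $\sum_i|z_i|^{d+1}$, for which the critical equation is a closed triangle $a_1+a_2+a_3=0$ with sides $(|z_i||z_{i+1}|)^d$. The Hessian inequality then becomes a weighted triangle inequality, proved with clipped weights and a layer-cake integral. It then repairs the degenerate axis directions with $\varepsilon S^{(d+1)/2}$ and smooths with $\eta$. Reducing ``to a one-variable inequality on a fundamental domain'' supplies none of this. For a new potential you must also recheck the metric-dependent inputs of Alvarez--Deroin: the sink basin, longitudinal contraction, and transverse expansion through a Bott-parallel normal form.

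Step 3 has further problems.
\begin{enumerate}
\item Connectedness ``from the connectedness of the complement of the Julia region'' is circular, since that complement is the Fatou set. The paper deduces it from connectedness of $B$ (Alvarez--Deroin, Proposition~11.5).
\item Your count $3-(d^2+d+1)=2-d(d+1)$ can be made rigorous, but only through the critical surface. The zeros of $W$ on $\mathbb P^2$ are the $d^2+d+1$ sources, each of index $+1$, together with the normally attracting surface $B$, which contributes $\chi(B)$. Poincar\'e--Hopf then gives $\chi(B)=2-d(d+1)=(d+2)(1-d)$, consistent with $\deg B=1-d$ and $N\mathcal F\simeq\mathcal O(d+2)$. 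This yields the genus only once $B$ is known to be connected, and Riemann--Hurwitz plays no role.
\item The fibres are a priori only topological discs. A complete leaf metric of pointwise negative curvature does not exclude $\mathbb C$, and here the curvature even tends to $0$ at the singular points. The paper combines Alvarez--Deroin's Theorem~9.5 with Lins Neto's theorem. The former says all but countably many regular leaves are simply connected and the rest are annuli with hyperbolic holonomy. The latter applies because $T\mathcal F\simeq\mathcal O(1-d)$ is negative and the singularities are non-degenerate, so every simply connected regular leaf is biholomorphic to $\mathbb D$.
\end{enumerate}
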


The first part proves the Fatou-fibration and genus prediction formulated in
\cite[Section~4]{Der26}. Here and below, by an annulus we mean a Riemann surface conformally
equivalent to a doubly connected hyperbolic domain. For the Fatou--Julia terminology, we use
Ghys--Gomez-Mont--Saludes \cite{GGS01} and Asuke \cite{Asu10}. As already noted in \cite{AD25}, the two notions of Fatou sets are equivalent in our case. For the Julia set, we obtain one
further necessary feature of the conjectural Cantor picture. Write
\[
\mathrm J(\mathcal J_d) = \mathbb P^2\setminus\mathrm F(\mathcal J_d).
\]

\begin{thm}\label{thm:transverse-perfect}
For every integer $d \geq 2$, every regular point $p\in\mathrm J(\mathcal J_d)$, and every
sufficiently small embedded holomorphic transversal $T$ through $p$,
\[
 p\in\overline{(\mathrm J(\mathcal J_d)\cap T)\setminus\{p\}}^{\,T}.
\]
Consequently, on every open local holomorphic transversal in the regular locus, the intersection
with the Julia set is relatively closed and has no isolated points.
\end{thm}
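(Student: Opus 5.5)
The plan is to argue by contradiction, using the structure of the Fatou set from Theorem~\ref{thm:dynamics-near-jd}(1) applied to $\mathcal F=\mathcal J_d$. There $\mathrm F(\mathcal J_d)\to B$ is a locally trivial disc bundle over a compact surface of genus $d(d+1)/2\geq 3$, and it is holomorphic transversally. Suppose $p\in\mathrm J(\mathcal J_d)$ is regular and isolated in $\mathrm J\cap T$ for some small transversal $T$. Shrinking $T$, we may take $T\cong\mathbb D$ with $T\cap\mathrm J=\{p\}$, so that $T^*=T\setminus\{p\}\subset\mathrm F$ is a punctured disc. Since $\mathrm J$ is saturated and closed, the whole leaf $L_p$ lies in $\mathrm J$. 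Its local plaque through $p$ then has a neighbourhood $U$ in which every other plaque is a Fatou plaque.

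First, I will study the composite $\pi|_{T^*}:T^*\to B$. This is a holomorphic map from a punctured disc into a compact hyperbolic Riemann surface. By the big Picard theorem for hyperbolic targets (a Kobayashi/Schwarz argument), it extends holomorphically across $p$. Call the extended value $b=\pi(p)$. Next, I will use the holonomy of $L_p$. Moving $T$ along a loop $\gamma$ in $L_p$ based at $p$ gives a holonomy germ $h_\gamma$ fixing $p$. Because $\pi$ is constant on leaves of the Fatou set, we get $\pi\circ h_\gamma=\pi$ on $T^*$. So every holonomy germ preserves the fibres of the extended map $\pi:T\to B$.

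The central step is to show that $p$ in fact lies in the Fatou set. The Fatou set is the set of points near which the holonomy pseudogroup is equicontinuous (normal) for the transverse metric. Near $p$, all holonomy maps between points of $T^*$ and other transversals are realized inside the disc bundle. They therefore commute with $\pi$, and the fibre of $\pi$ through each point of $T^*$ is a single leaf intersected with the transversal. So each holonomy map $h$ defined on a neighbourhood $V\ni p$ in $T$ satisfies $\pi\circ h=\pi$ on $V\setminus\{p\}$. If $\pi$ has a critical point of order $k$ at $p$, then $h$ permutes the $k$ local branches of $\pi^{-1}$ and is one of finitely many local deck maps. If $\pi$ is locally injective, then $h$ is the identity in the chart given by $\pi$. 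Either way the family of holonomies is normal on $V$, which contradicts $p\in\mathrm J$.

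I expect the main obstacle to be the passage from individual germs to the pseudogroup. Holonomy maps with domains not containing $p$ but accumulating to it, and maps landing on other transversals, must be shown to factor through $\pi$ uniformly. I would handle this with a Schwarz--Pick bound: holonomy maps preserve the pulled-back hyperbolic metric $\pi^*g_B$. That metric is comparable to a fixed metric on $T$ away from critical points and degenerates only at the finitely many critical points, which gives local equicontinuity at $p$. A second subtle point is that the Fatou set should be identified with the bundle itself rather than being strictly larger; this identification is exactly what Theorem~\ref{thm:dynamics-near-jd}(1) provides. The final sentence of the statement follows directly: $\mathrm J$ is closed, and no point of $\mathrm J\cap T$ is isolated.
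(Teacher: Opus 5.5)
Your first two steps are correct. The holomorphic extension of $\pi\circ i$ across the puncture is exactly Lemma~\ref{lem:puncture-removable}, and $\pi\circ h=\pi$ on $T^*$ for holonomy maps realized in $\mathrm F$ is correct. The gap is the central step, the passage from $\pi$-invariance to ``the family of holonomies is normal on $V$''.

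Your deck-map analysis only handles return maps $V\to T$. Equicontinuity concerns holonomy along arbitrarily long paths in $L_p$ into other transversals $T'$, and the images $h(V)\subset T'$ may approach $\mathrm J$ elsewhere. The identity $\pi\circ h=\pi$ controls $d(h(x),h(y))$ in the ambient transverse metric only if $\pi^*g_B$ dominates a fixed multiple of that metric on the target transversals, uniformly up to $\mathrm J$. Comparability of $\pi^*g_B$ with a metric on the source $T$ gives only the upper bound on the source side, which is the harmless half. Compactness gives the needed lower bound only on compact subsets of $\mathrm F$. Nothing in your argument supplies it near $\partial\mathrm F$. Also, $\pi$-invariance alone does not show that the germs $h_\gamma$ along $L_p$ are defined on a common neighbourhood of $p$. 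So the Schwarz--Pick remedy you sketch does not close the argument, and the conclusion $p\in\mathrm F$ is unsupported.

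The missing ingredient is the dynamical transverse expansion of Section~7, which you never use. By \eqref{eq:fgrowth} and \eqref{eq:normalgrowth}, normal vectors grow at least like $e^{ct}$ along $W$-orbit segments outside $U_B$. This is precisely what bounds the conorm of $D\Pi$ from below on all of $D$. Once you have it, no Fatou-set characterization is needed:

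1. Let $\tau(z)$ be the first time the $W$-orbit of $i(z)$ reaches $\partial U_B$.
2. Then $\tau(z)\to+\infty$ as $z\to0$. Otherwise $p$ would flow into $\partial U_B\subset D$ in finite time, and $D$ is flow-invariant, so $p\in D$.
3. Differentiate $\Pi\circ\Phi_W^{t}=\Pi$ at the fixed time $t=\tau(z)$. Let $m$ be a lower bound for the conorm of $D\Pi$ on $\partial U_B$, and $a$ a lower bound for $\|[Di_z(v)]\|_\perp/|v|$. This gives $\|D(\Pi\circ i)_z\|\geq m\,a\,e^{c\tau(z)}\to+\infty$.
4. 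This contradicts the bounded derivative of the holomorphic extension from your first step.

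That is the paper's proof. The lower bound that would rescue your equicontinuity step is exactly this expansion estimate. Once you have it, the contradiction already appears at $p$ itself, since it shows $\pi^*g_B$ blows up at $p$ on $T$. So the detour through the Fatou set is unnecessary.
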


Theorem~\ref{thm:transverse-perfect} establishes transverse perfectness only. It does not prove the
remaining parts of Deroin's conjectural Cantor picture, namely total disconnectedness and zero
transverse measure. We also prove Deroin's all-degree Jouanolou-type conjecture \cite[Conjecture~4.6]{Der26}; see
Theorem~\ref{thm:jouanolou-type}. 


\medskip
\noindent\textbf{Organization.}
In Section~2, we give a conceptual strategy of the proof and explain the role of the degree-matching
exponent, the regularization and the Alvarez--Deroin dynamical mechanism. In Section~3, we introduce
homogeneous potentials, the real field $W$, and the potential form of properties $P_B$ and $P_S$ in
our context. In Sections~4--6, we establish the $p = d+1$ Hessian estimate, repair and smooth the
potential, prove projective Levi positivity, and verify $P_S$. In Section~7, we verify the
metric-dependent parts of the Alvarez--Deroin argument for the homogeneous potential constructed
here and then apply their structural-stability mechanism. In Section~8, we apply this to
$\mathcal J_d$ for $d \geq 2$, prove Theorem~\ref{thm:dynamics-near-jd}, and prove transverse
perfectness of the
Julia set. In Section~9, we prove the all-degree Jouanolou-type conjecture. In Section~10, we record
some consequences and open parts of Deroin's conjectural picture.

\section{Strategy of the proof of Theorem \ref{thm:structural-stability}}\label{sec:strategy}

We briefly indicate the organization of the argument. The calculations are proved in full in the
later sections, while the transfer to the Alvarez--Deroin construction is reduced to its
metric-dependent inputs.

\begin{enumerate}
\item \textbf{The metric problem:} Alvarez and Deroin organize the argument around a real vector field $W$ tangent to the complex
leaves. Its zeros form an attracting transverse surface when the property $P_B$ holds, while the
property $P_S$ requires the projective singularities of the original foliation $\mathcal F$ are
hyperbolic and are sources for the real vector field $W$. Away from the sink and source neighbourhoods, longitudinal contraction
and transverse expansion lead to the invariant laminations and affine leaf geometry used in their
structural-stability proof.

For higher degree, the delicate point is the property $P_B$. The Euclidean choice used for
$\mathcal J_2$ does not give the required estimate in general; Alvarez--Deroin report numerical
evidence for other $\ell^p$ choices in \cite[Section~13.1]{AD25}. We therefore look for a
homogeneous potential adapted to the cyclic field rather than keeping the Euclidean one.

\item \textbf{The choice \texorpdfstring{$p = d+1$}{p = d+1}:} For
\[
F_p(z) = |z_1|^p+|z_2|^p+|z_3|^p
\]
and $J_d = (z_2^d,z_3^d,z_1^d)$, the three terms in the leafwise first derivative have moduli
$|z_i|^{p-1}|z_{i+1}|^d$. The adjacent variables occur with the same exponent exactly when
$p = d+1$.
Then the three moduli are
\[
(|z_1||z_2|)^d,\qquad (|z_2||z_3|)^d,\qquad (|z_3||z_1|)^d,
\]
and the critical equation becomes a triangle relation $a_1+a_2+a_3 = 0$. In Section~4, we show that the
second derivatives can be written as weighted quadratic expressions in these three sides, from which
we prove that
\[
F_{t\bar t} > |F_{tt}|,
\]
at every non-axis critical point.

\item \textbf{The coordinate directions:} The potential $F_p$ has three degenerate critical directions, namely the coordinate axes. We add
$\varepsilon S^{(d+1)/2}$ to the potential for small enough $\varepsilon > 0$, where
$S = |z_1|^2+|z_2|^2+|z_3|^2$, to repair these degeneracies. Away from fixed axis neighbourhoods,
the
strict inequality from Section~4 and compactness tell us that $P_B$ holds under a small perturbation
of the potential. We then smooth across the coordinate hyperplanes by replacing the potential with
\[
H_{\varepsilon,\eta}(z)
 = \sum_{j = 1}^3\bigl(|z_j|^2+\eta S\bigr)^{(d+1)/2}
   +\varepsilon S^{(d+1)/2}.
\]
In Section~5, we prove that, for $\varepsilon, \eta > 0$ sufficiently small, all regular critical
points of the potential $H_{\varepsilon,\eta}$ have the required Hessian sign.

\item \textbf{The normal metric and structural stability:} Set $N = H_{\varepsilon,\eta}^{1/(d+1)}$. Balanced homogeneity makes $N$ a Hermitian norm on
$\mathcal O(-1)$ and hence gives a metric on $N\mathcal J_d\simeq\mathcal O(d+2)$. Along a
Bott-parallel
normal vector, its logarithmic norm differs by a constant from a positive multiple of
$-\log H_{\varepsilon,\eta}$. Thus $P_B(H)$ is exactly the required local-maximum condition for the
infinitesimal transverse distance. In Section~6, we also prove strict projective Levi positivity and
verify that the projective singularities are sources for the associated real field.

Changing the potential changes the auxiliary real field and metrics, so one still has to check the
metric-dependent parts of the Alvarez--Deroin argument. In Section~7, we verify the sink-basin
construction, the longitudinal contraction and the transverse expansion for the potential
constructed here. Once these facts are established, the invariant foliations, affine leaf geometry,
orbital stability and band extension are exactly the arguments of \cite{AD25}. In Section~8, we
apply this to $\mathcal J_d$ for $d \geq 2$ and obtain the structural-stability theorem and its
dynamical consequences.

\end{enumerate}

\section{Homogeneous potentials and the leafwise real field}

Let $V$ be a homogeneous holomorphic vector field of degree $d \geq 1$ on $\mathbb{C}^3$, non-zero
away from the origin, and let $R = \sum_{j = 1}^3z_j\frac{\partial}{\partial z_j}$ be the radial
field.
The quotient map will be denoted by
\[
\pi:\mathbb{C}^3\setminus\{0\}\longrightarrow \mathbb{P}^2.
\]
A projective singularity is a direction for which $V(z) = \lambda z$.

\begin{defn}
A positive $C^2$ function $H$ on $\mathbb{C}^3\setminus\{0\}$ is called a balanced homogeneous potential
of degree $p > 0$, if for all $z \in \mathbb C^3 \setminus\{0\}$ and $\lambda \in \mathbb{C}^*$
\[
H(\lambda z) = |\lambda|^pH(z).
\]
We define $N = H^{1/p}$, and $f = -\log H$, the associated
logarithmic
potential.
\end{defn}

Along an integral curve $q(t) = (z_1(t), z_2(t), z_3(t))$ of the vector field $V = (V_1, V_2, V_3)$,
consider the leafwise metric
\begin{equation}\label{eq:leafmetric}
g = N^{2d-2}|dt|^2.
\end{equation}
Also, note that
\[
H_t(q(t)) = \sum_{j = 1}^{3} H_{z_j}(q(t))\frac{\partial z_j(t)}{\partial t} = \sum_{j = 1}^{3}
H_{z_j}(q(t)) V_j(q(t)).
\]
Thus we can write $H_t = \sum_{j = 1}^3 H_{z_j} V_j$. If $Y$ is a $(1,0)$ vector field, write
$\mathfrak r(Y):= Y+\overline{Y}$.

\noindent A direct computation gives the leafwise gradient of $f$.

\begin{lem}\label{lem:gradient}
The $g$-gradient of $f$ on the leaves of $V$ is
\begin{equation}\label{eq:rho}
\widetilde W = \mathfrak r(\rho V),
\end{equation}
where $\rho = -2N^{-2d+2}\frac{\overline{H_t}}{H}$. Moreover, $\widetilde W$ descends to a real
field
$W$ tangent to the projective foliation as $\rho(\lambda z) = \lambda^{1-d}\rho(z)$.
\end{lem}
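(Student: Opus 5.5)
The plan is to compute the gradient directly from the definition: for a real function $f$ on a Riemann surface with conformal metric $g = \lambda|dt|^2$, the gradient is $\operatorname{grad}_g f = \lambda^{-1}\cdot 2 f_{\bar t}\,\partial_t + \lambda^{-1}\cdot 2 f_t\,\partial_{\bar t}$. To fix the constant, write $t = u+iv$, so that $|dt|^2 = du^2+dv^2$ and the Euclidean gradient is $f_u\partial_u+f_v\partial_v$. Using $\partial_u = \partial_t+\partial_{\bar t}$ and $\partial_v = i(\partial_t-\partial_{\bar t})$, this equals $(f_u+if_v)\partial_t + \text{c.c.} = 2f_{\bar t}\,\partial_t + \text{c.c.}$. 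Dividing by the conformal factor then gives $\operatorname{grad}_g f = \mathfrak r\bigl(2\lambda^{-1}f_{\bar t}\,\partial_t\bigr)$.

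Next I would specialise to the situation of the lemma. Take $\lambda = N^{2d-2}$ and $f = -\log H$. Since $H$ is real, $f_{\bar t} = -H_{\bar t}/H = -\overline{H_t}/H$. Along the integral curve $q(t)$, the holomorphic tangent vector $\partial_t$ pushes forward to $V$, because $q'(t) = V(q(t))$; the identity $H_t = \sum_j H_{z_j}V_j$ recorded above is just the chain rule for this. Substituting gives $\widetilde W = \mathfrak r(\rho V)$ with $\rho = -2N^{-2d+2}\,\overline{H_t}/H$, as claimed.

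For the descent I would check homogeneity. We have $N(\lambda z) = |\lambda|N(z)$, so $N^{-2d+2}$ scales by $|\lambda|^{2-2d}$. Since $H(\lambda z) = |\lambda|^pH(z)$, differentiation gives $H_{z_j}(\lambda z) = |\lambda|^p\lambda^{-1}H_{z_j}(z)$, using $\partial_{z_j}(|\lambda|^p H(\lambda z))$ and writing $|\lambda|^p = \lambda^{p/2}\bar\lambda^{p/2}$. Combined with $V_j(\lambda z) = \lambda^dV_j(z)$, this yields $H_t(\lambda z) = |\lambda|^p\lambda^{d-1}H_t(z)$. Dividing by $H$ removes $|\lambda|^p$, and conjugating gives $\bar\lambda^{d-1}$. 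Hence
\[
\rho(\lambda z) = |\lambda|^{2-2d}\,\bar\lambda^{d-1}\,\rho(z) = \lambda^{1-d}\,\bar\lambda^{1-d}\,\bar\lambda^{d-1}\,\rho(z) = \lambda^{1-d}\rho(z).
\]
It follows that $\rho V$ scales as $\lambda^{1-d}\lambda^d = \lambda$. This is exactly how the pushforward of a vector field under $z\mapsto\lambda z$ scales, so $(m_\lambda)_*(\rho V) = \rho V$, where $m_\lambda$ denotes multiplication by $\lambda$. Thus $\rho V$, and with it $\widetilde W = \mathfrak r(\rho V)$, is invariant under the $\mathbb C^*$-action, and $d\pi$ maps it to a well-defined real field $W$ on $\mathbb P^2$. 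This field is tangent to the projective foliation because $d\pi(V)$ spans the tangent direction of the leaf; it vanishes at the projective singularities, where $d\pi(V) = 0$.

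The only delicate step is the bookkeeping of conventions. There are two points to get right: the factor $2$ in the gradient coming from $\partial_t$ versus real coordinates, and the conjugation in $\rho$. Everything else is routine.
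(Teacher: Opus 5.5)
Your proposal is correct and follows the paper's proof: you compute the conformal gradient $\frac{2}{h}\bigl(f_{\bar t}\,\partial_t + f_t\,\partial_{\bar t}\bigr)$ in the leaf coordinate, identify $\partial_t$ with $V$ along the integral curve, and substitute $f_{\bar t} = -\overline{H_t}/H$. The only difference is that you spell out two things the paper leaves implicit: the origin of the factor $2$, and the homogeneity computation $H_t(\lambda z) = |\lambda|^p\lambda^{d-1}H_t(z)$ giving $\rho(\lambda z)=\lambda^{1-d}\rho(z)$ and hence the $\mathbb C^*$-invariance of $\mathfrak r(\rho V)$; the paper simply asserts that this follows from the homogeneity of $H$ and $V$.
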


\begin{proof}
Write $g = h|dt|^2$ with $h = N^{2d-2}$. In the complex leaf coordinate $t$,
\begin{align*}
\widetilde W(q(t)) &= \nabla_{g}f(q(t)) = \frac{2}{h(q(t))} \left(f_{\bar t}
\frac{\partial}{\partial t}(q(t)) + f_{t} \frac{\partial}{\partial \bar t}(\overline{q(t)})
\right)\\
&= \frac{2}{h(q(t))} \left(f_{\bar t} V(q(t)) + f_{t} \overline{V(q(t))} \right)\\
&= \mathfrak r \left( \frac{2}{h(q(t))} f_{\bar t} V(q(t)) \right).
\end{align*}
Since $f = -\log H$, therefore $f_{\bar t} = - \frac{H_{\bar t}}{H} = - \frac{\overline{H_{t}}}{H}$
and we get exactly \eqref{eq:rho}. The homogeneity of $\rho$ follows from that of $H$ and $V$.
\end{proof}

For $r > 0$, and $\theta \in \mathbb{R}$, balanced homogeneity of $H$ gives us $H(rz) = r^{p} H(z)$
and $H(e^{i\theta} z) = H(z)$ for all $z \in \mathbb C^3 \setminus \{0\}$. Differentiating and then
comparing the two equations at $r = 1$ and $\theta = 0$ gives the identity
\begin{equation}\label{eq:Euler}
\sum_{j = 1}^3H_{z_j}z_j = \frac{p}{2}H.
\end{equation}
Thus, at a projective singularity $V(z) = \lambda z$,
\begin{equation}\label{eq:sing-rho}
H_t = \frac{p}{2}\lambda H,
\qquad
\rho = -p\overline{\lambda}\,N^{-2d+2}.
\end{equation}
In particular, $H_t$ does not vanish at a projective singularity. Also recall that, at a regular
critical point, the real Hessian of $H(q(t))$ is positive definite if and only if
\begin{equation}\label{eq:PBineq}
H_{t\bar t} > |H_{tt}|.
\end{equation}
At such a point, the Hessian of $f = -\log H$ is $-H^{-1}$ times the Hessian of $H$. Therefore the
corresponding zero of $W$ is a non-degenerate sink along the leaf.

\begin{defn}\label{def:PBPS}
We say that $(V,H)$ satisfies $P_B(H)$ if every zero of $H_t$ satisfies \eqref{eq:PBineq}. We say
that $(V,H)$ satisfies $P_S(H)$ if every projective singularity is hyperbolic, in the sense that the
quotient of the two local eigenvalues is non-real, and is a source for the real field $W$. In both
cases we assume that $V$ is non-zero on $\mathbb{C}^3\setminus\{0\}$.
\end{defn}

\noindent The Levi positivity of $\log H$ will be used repeatedly below.

\begin{lem}\label{lem:metric-sign}
Assume that $H$ is smooth and that the Levi form of $\log H$ is positive on every non-radial complex
direction. Then the metric \eqref{eq:leafmetric} is complete on every regular projective leaf, its
Gaussian curvature is non-positive (and is strictly negative when $d \geq 2$), and
\begin{equation}\label{eq:divergence}
\operatorname{div}_gW = \Delta_gf = -p\Delta_g\log N < 0.
\end{equation}
\end{lem}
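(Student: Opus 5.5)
The plan is to work in a local flow-box chart and express everything through the leaf coordinate $t$ along an integral curve $q(t)$ of $V$. Over a small disc in a regular projective leaf we choose a local lift $q(t)$ to $\mathbb C^3\setminus\{0\}$. We then check that $g = N(q(t))^{2d-2}|dt|^2$ does not depend on the lift, which is what makes it a metric on the projective leaf. Indeed, rescaling the lift by a holomorphic $\lambda(t)$ reparametrizes time by $dt' = \lambda^{1-d}dt$, and $N$ scales by $|\lambda|$, so $N^{2d-2}|dt|^2$ is invariant.

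\textbf{Curvature.} For the conformal metric $h|dt|^2$, the Gaussian curvature is $K = -\tfrac{2}{h}\partial_t\partial_{\bar t}\log h$. Here $\log h = \tfrac{2d-2}{p}\log H(q(t))$, so
\[
\partial_t\partial_{\bar t}\log h = \tfrac{2d-2}{p}\,(\partial\bar\partial\log H)(V,\bar V),
\]
because $q$ is holomorphic with $q'(t) = V(q(t))$. At a regular point of the projective foliation, $V$ is not radial, since $V\neq\lambda z$. The Levi positivity hypothesis therefore makes this quantity strictly positive. Hence $K\le 0$, with equality identically when $d=1$, and $K<0$ when $d\ge2$.

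\textbf{Divergence.} On a surface with conformal metric $h|dt|^2$, the Laplacian is $\Delta_g = \tfrac{4}{h}\partial_t\partial_{\bar t}$. By Lemma~\ref{lem:gradient}, $W$ is the $g$-gradient of $f$, so $\operatorname{div}_gW = \Delta_g f$. Since $f = -\log H = -p\log N$, we get $\Delta_g f = -p\Delta_g\log N$. By the same chain-rule computation, this equals $-\tfrac{4}{h}(\partial\bar\partial\log H)(V,\bar V) < 0$. One point needs care here: $f$ is only defined on lifts. Changing the lift adds $-p\log|\lambda(t)|$ with $\lambda$ holomorphic and nonvanishing, which is pluriharmonic, so $\Delta_g f$ is well defined on the projective leaf, just as Lemma~\ref{lem:gradient} shows for $W$.

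\textbf{Completeness.} This is the step I expect to be the main obstacle. I would compare $g$ with a model complete metric near the singularities. First, $N$ is comparable to the Euclidean norm on the unit sphere by compactness and homogeneity. So on lifts of norm one, $g$ is uniformly comparable to the metric $|z|^{2d-2}|dt|^2$. This is a multiple of the pull-back of the Fubini--Study metric, because $|V|/|z|^d$ is bounded above and below away from the singular directions.

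Away from a neighbourhood of $\operatorname{Sing}$, $g$ is therefore bi-Lipschitz to the Fubini--Study leafwise metric. Since $\mathbb P^2$ is compact, a $g$-finite path in the leaf can only escape by approaching a singular point. Near a projective singularity $V(z) = \lambda z$, the saturated foliation is given locally by a vector field $X$ vanishing there, and $g$ is comparable to $|dt|^2$ in the time of $X$. A finite-length path in that time metric stays at bounded time. Its endpoint is then still a regular point, because the singular point is reached only in infinite time along $X$.

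So I would argue as follows: a $g$-Cauchy path has finite $t$-length in local flow time, hence converges in the leaf, and Hopf--Rinow gives completeness. For $d\ge2$, an alternative route is to use that curvature is bounded above by a negative constant on compact regular sets. I would keep the local flow-time comparison as the main argument, since it also covers $d=1$.
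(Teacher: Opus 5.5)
Your proposal is correct and follows essentially the same route as the paper: curvature and divergence come from $K_g=-\tfrac12\Delta_g\log h$ and $W=\nabla_g f$, together with Levi positivity of $\log H$ in the non-radial direction $V$. Completeness comes from comparison with the homogeneous (Fubini--Study) leaf metric away from the singularities, plus $g\asymp|dt_X|^2$ near a singular point; the paper makes your ``reached only in infinite time'' claim precise through $|X(w)|\le C|w|$, so that $g$ dominates a constant multiple of $|dw|^2/|w|^2$. Two small corrections: the Fubini--Study comparison is controlled by $|V(z)\wedge z|/|z|^{d+1}$, not by $|V|/|z|^d$ (which is bounded above and below everywhere), and your suggested alternative via a negative upper curvature bound would not give completeness, so drop it.
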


\begin{proof}
On the Euclidean unit sphere, $N(z)/\|z\|$ is bounded above and below by positive constants. Hence
\eqref{eq:leafmetric} is uniformly comparable with the homogeneous leaf metric used in \cite{AD25}.
Near a singular point, a local defining holomorphic field $X$ satisfies $|X(w)| \leq C|w|$, and the
induced metric dominates a constant multiple of $|dw|^2/|w|^2$. Thus a finite length path cannot
reach a singular point. Away from the singularities, completeness follows from the usual flow-box
continuation.

\noindent In complex time, $g = h|dt|^2$ with $h = N^{2d-2}$, and hence
\[
K_g = -\frac12\Delta_g\log h = -(d-1)\Delta_g\log N \leq 0.
\]
For $d \geq 2$ the inequality is strict because a projective leaf lifts to a non-radial complex
direction; for $d = 1$ the curvature is identically zero. Since $W = \nabla_g f$,
\[
\operatorname{div}_gW = \Delta_g f.
\]
Moreover $f = -\log H = -p\log N$, and therefore
\[
\operatorname{div}_gW = -\Delta_g\log H = -p\Delta_g\log N < 0,
\]
again by strict Levi positivity along the non-radial leaf direction.
\end{proof}

\begin{rem}\label{rem:gauge-dependence}
The sink and source properties $(P_B(H) \ \text{and} \ P_S(H))$ do not exhaust the metric input of
the Alvarez--Deroin \cite{AD25} argument. Their Lemma~4.2 records the leafwise curvature of the
homogeneous metric, while their Lemma~6.1 uses strict Levi positivity of the logarithmic potential
to obtain negative leafwise divergence. For a general potential, the identities above become
\[
K_g = -\frac{d-1}{p}\Delta_g\log H,
\qquad
\operatorname{div}_gW = -\Delta_g\log H.
\]
The property $P_B(H)$ controls the Hessian only at the zeros of $H_t$; it supplies no sign for
these expressions at an arbitrary regular point. Smoothness, balanced homogeneity and strict
projective Levi positivity are therefore part of the sufficient potential hypotheses used here, in
addition to $P_B(H)$ and $P_S(H)$. Strict projective Levi positivity is a convenient global
sufficient condition; the longitudinal estimate itself only needs positivity along the non-radial
leaf directions. The proof of structural stability below uses completeness and the strict divergence
estimate, not strict negativity of the Gaussian curvature, so the same criterion also covers the
linear case $d = 1$, when the critical surface is non-empty. Equivalence with the Euclidean norm
gives
metric comparison and completeness, but does not give a sign for second derivatives.

The source property is less sensitive to the potential, as was the case in \cite{AD25}. For a fixed
homogeneous representative $V$ and a projective singularity represented by $V(z) = \lambda z$,
formula~\eqref{eq:sing-rho} gives
\[
\rho_H(z) = -p\overline\lambda\,N(z)^{-2d+2}.
\]
For two smooth positive balanced potentials, the corresponding coefficients at this point differ by
a positive real factor. Since the local holomorphic defining field vanishes at the singularity, the
linearizations of the two real fields differ by that same positive factor. Thus the source character
is independent of this choice of potential. The non-real eigenvalue ratio condition in $P_S$ belongs
to the complex foliation itself. In contrast, the critical surface and $P_B$ genuinely depend on the
potential.
\end{rem}

\section{The exponent \texorpdfstring{$p=d+1$}{p=d+1} and the property \texorpdfstring{$P_B(H)$}{PB(H)}}

We begin with a general power potential in order to explain the choice of exponent and then turn
directly to the property $P_B(H)$ for the Jouanolou field. For $p \ge 2$ and $d \ge 2$, let
\[
V_d = (z_2^d,z_3^d,z_1^d),
\qquad
H_p(z) = \sum_{i = 1}^3|z_i|^p,
\]
with cyclic indices. This is the same Jouanolou field as \eqref{eq:Jd}.

\subsection{Why \texorpdfstring{$p = d+1$}{p = d+1}?}\label{subsec:why-p}

Let $Z(t)$ be a holomorphic integral curve of $V_d$, so
\begin{equation}\label{eq:curve-first}
 z_i' = z_{i+1}^d,
 \qquad
 z_i'' = d\,z_{i+1}^{d-1}z_{i+2}^d.
\end{equation}
For the one-variable function $\phi_p(w) = |w|^p = (w\bar w)^{p/2}$, Wirtinger differentiation
gives,
for $w\neq0$,
\begin{equation}\label{eq:wirtinger-power}
 (\phi_p)_w = \frac p2|w|^{p-2}\bar w,
 \qquad
 (\phi_p)_{w\bar w} = \frac{p^2}{4}|w|^{p-2},
 \qquad
 (\phi_p)_{ww} = \frac{p(p-2)}4|w|^{p-4}\bar w^2.
\end{equation}
Consequently
\begin{equation}\label{eq:Fp-first}
 (H_p)_t = \frac p2\sum_{i = 1}^3 a_i(p),
 \qquad
 a_i(p):=|z_i|^{p-2}\bar z_i\,z_{i+1}^d.
\end{equation}
At a regular critical point of $H_p$, we have
\begin{equation}\label{eq:critical-closure-general}
 a_1(p)+a_2(p)+a_3(p) = 0.
\end{equation}
The moduli of these three complex numbers are
\begin{equation}\label{eq:general-side-moduli}
 |a_i(p)| = |z_i|^{p-1}|z_{i+1}|^d.
\end{equation}
We want the side length attached to the ordered pair $(z_i,z_{i+1})$ to be symmetric in the two
adjacent coordinates. This requires $p = d+1$,
and with this choice,
\begin{equation}\label{eq:symmetric-side-moduli}
 |a_i| = (|z_i||z_{i+1}|)^d.
\end{equation}
Thus the critical equation \eqref{eq:critical-closure-general} says that the three complex numbers
$a_i$ close up as the sides of a possibly degenerate Euclidean triangle. We shall see below that the
same choice of exponent also puts the second derivative terms into a form adapted to this triangle
relation.

\noindent From now on, we put $p = d+1$ and
\[
F_{d}:= H_{d+1} = \sum_{i = 1}^3|z_i|^{d+1}.
\]

\subsection{Reduction of \texorpdfstring{$P_B(H)$}{PB(H)} for the Jouanolou foliation}

We first write the condition of Section~3 explicitly for $F_d$. For $z_i\neq0$, set
\[
R_i = |z_i|^{d+1},
\qquad
a_i = |z_i|^{d-1}\overline{z_i}\,z_{i+1}^d.
\]
Since $Z'(t) = V_d(Z(t))$, formula \eqref{eq:wirtinger-power} gives
\begin{align}
(F_d)_t
&=\sum_i (\phi_p)_{z_i}z_i'
 = \frac {d+1}{2}\sum_i|z_i|^{d-1}\bar z_i z_{i+1}^d
 = \frac {d+1}{2}\sum_i a_i,
\label{eq:Ft}\\
(F_d)_{t\bar t}
&=\sum_i(\phi_p)_{z_i\bar z_i}|z_i'|^2
 = \frac{(d+1)^2}{4}\sum_i|z_i|^{d-1}|z_{i+1}|^{2d}
 = \frac{(d+1)^2}{4}\sum_i\frac{|a_i|^2}{R_i}.
\label{eq:Ftbart}
\end{align}
For the pure $t$-derivative there are two contributions. The chain rule gives
\begin{equation}\label{eq:hessian-acceleration-schematic}
\frac{d^2}{dt^2}F_d(Z(t))
 = (F_d)_{zz}(Z(t))[Z'(t),Z'(t)]
+(F_d)_{z}(Z(t))[Z''(t)].
\end{equation}
Substituting \eqref{eq:wirtinger-power} and \eqref{eq:curve-first},
\begin{align*}
(F_d)_{tt}
 = {}&\frac{(d+1)(d-1)}4\sum_i|z_i|^{d-3}\bar z_i^{\,2}z_{i+1}^{2d}
+\frac{(d+1)d}{2}\sum_i|z_i|^{d-1}\bar z_i z_{i+1}^{d-1}z_{i+2}^d\\
 = {}&\frac{d+1}{2}\left(
\frac{d-1}{2}\sum_i\frac{a_i^2}{R_i}
+d\sum_i\frac{a_{i-1}a_i}{R_i}
\right).
\end{align*}
Thus, we have
\begin{equation}\label{eq:Ftt}
(F_d)_{tt} = \frac {d+1}{2}\left(
\frac{d-1}{2}\sum_i\frac{a_i^2}{R_i}
+d\sum_i\frac{a_{i-1}a_i}{R_i}
\right).
\end{equation}

\noindent At a critical point, \eqref{eq:Ft} becomes
\begin{equation}\label{eq:triangle-closure-special}
a_1+a_2+a_3 = 0.
\end{equation}
Put $t_i:=|a_i| = (|z_i||z_{i+1}|)^d$ and $r:=\frac {d+1}{d} = 1+\frac{1}{d}\in(1,2)$. The factors
$R_i^{-1}$ can now be expressed in terms of the side lengths. Indeed,
\[
t_{i+1}^r
 = (|z_{i+1}||z_{i+2}|)^{dr}
 = (|z_{i+1}||z_{i+2}|)^{d+1}
 = R_{i+1}R_{i+2},
\]
and therefore
\begin{equation}\label{eq:weightidentity}
\frac1{R_i} = \frac{t_{i+1}^r}{R_1R_2R_3}.
\end{equation}
Set $w_i = t_{i+1}^r$, and write
\[
U = \sum_iw_i a_i^2,
\qquad
C = \sum_iw_i a_i a_{i-1},
\qquad
Q = \sum_iw_i|a_i|^2.
\]
Substitution of \eqref{eq:weightidentity} into \eqref{eq:Ftbart} and \eqref{eq:Ftt} gives
\[
(F_d)_{t\bar t}
 = \frac{(d+1)^2}{4R_1R_2R_3}Q,
\quad
(F_d)_{tt} = \frac {d+1}{4 R_1 R_2 R_3}\left(
(d-1)U
+ 2dC
\right).
\]
Since $(d-1)U+2dC = d(U+2C)-U$. Hence
\begin{equation}\label{eq:ratio}
\frac{|(F_d)_{tt}|}{(F_d)_{t\bar t}}
 = \frac{|d(U+2C)-U|}{(d+1)Q}.
\end{equation}
The elementary estimate $|U| \leq Q$ leaves one term to control. In view of
\eqref{eq:triangle-closure-special}, the needed estimate is a weighted inequality for the sides of a
triangle, namely a bound of $|U+2C|$ by $Q$ for the particular weights $w_i = t_{i+1}^r$. We prove
this next.

\begin{rem}\label{rem:AD-gauges}
For $d = 2$, Alvarez--Deroin work with the Euclidean potential, corresponding to $p = 2$. The
coordinate
critical points are then non-degenerate, but the global verification of $P_B$ at the critical points
is difficult; their proof reduces the required estimate to a finite collection of exact integer
inequalities which is checked by computer \cite[Section~12.3]{AD25}. For higher degrees, they also
consider $\ell^p$-type norms and report numerical evidence, in particular, for the choice $p = d$ in
degrees $3,4,5$ \cite[Section~13.1]{AD25}. The calculation above explains why we use $p = d+1$
instead. For this exponent the critical equation gives the symmetric side lengths
\eqref{eq:symmetric-side-moduli}, and the Hessian condition reduces to the triangle inequality
below.
\end{rem}

\subsection{The \texorpdfstring{$P_B(H)$}{PB(H)} estimate}

We now prove the estimate required by \eqref{eq:ratio}. More generally, let
$a_1,a_2,a_3\in\mathbb C$ satisfy
\[
a_1+a_2+a_3 = 0,
\]
and, for non-negative weights $w = (w_1,w_2,w_3) \in \mathbb R_{\ge 0}^3$, put
\[
U(w) = \sum_iw_ia_i^2,
\qquad
C(w) = \sum_iw_ia_ia_{i-1},
\ \ \text{and} \qquad
Q(w) = \sum_iw_i|a_i|^2.
\]
Let
\[
\mathcal K = \{w\in \mathbb R_{\ge 0}^3: |U(w)+2C(w)| \leq Q(w)\}.
\]
Since $U, C$ and $Q$ depend linearly on the weights, $\mathcal K$ is a closed convex cone.

\begin{lem}\label{lem:clipped}
For every real number $h \geq 0$, if $w_i(h) = \min\{|a_{i+1}|^2,h\}$, for all $1 \le i \le 3$, then the weight $w(h) = (w_1(h), w_2(h), w_3(h)) \in \mathcal K$
\end{lem}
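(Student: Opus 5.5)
The plan is to exploit the convexity of $\mathcal K$ and the piecewise-affine dependence of $w(h)$ on $h$. Order the three quantities $|a_{j}|^2$ as $m_3\le m_2\le m_1$. On each of the intervals $[0,m_3]$, $[m_3,m_2]$, $[m_2,m_1]$ and $[m_1,\infty)$, the map $h\mapsto w(h)$ is affine, and it is constant on the last one. Since $\mathcal K$ is a closed convex cone, it suffices to check membership at the breakpoints $h=0,m_3,m_2,m_1$.

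First rewrite the relevant quantity using the closure relation. Since $a_i+a_{i-1}=-a_{i+1}$, we have $a_i+2a_{i-1}=a_{i-1}-a_{i+1}$, so
\[
U(w)+2C(w)=\sum_i w_i\,a_i\,(a_{i-1}-a_{i+1}).
\]
The breakpoints $h=0$ and $h=m_3$ are immediate. At $h=0$ the weight vanishes. At $h=m_3$ the weight is $m_3(1,1,1)$, and for equal weights
\[
U+2C=\Bigl(\sum_i a_i\Bigr)^2=0 .
\]
A note of caution: one cannot decompose $w(h)$ into nested indicator vectors and check each separately. A single indicator $e_i$ lies in $\mathcal K$ only if $|a_{i-1}-a_{i+1}|\le|a_i|$, and this already fails for an equilateral triangle. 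So the base vector $m_3(1,1,1)$ has to be carried along, and the two remaining breakpoints must be checked as genuine inequalities.

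The two remaining cases are as follows.
\begin{itemize}
\item At $h=m_2$ the weight has two entries equal to $m_2$ and one equal to $m_3$. Subtracting $m_2(1,1,1)$, which contributes nothing to $U+2C$, reduces the left side to $(m_3-m_2)\,a_k(a_{k-1}-a_{k+1})$ for the single index $k$ carrying the smallest weight. The target inequality becomes
\[
(m_2-m_3)\,|a_k|\,|a_{k-1}-a_{k+1}|\le m_2\,Q_0+\dots ,
\]
that is, a bound against $Q$ written out explicitly.
\item At $h\ge m_1$ the weight is the full vector $w_i=|a_{i+1}|^2$, and we must show
\[
\Bigl|\sum_i |a_{i+1}|^2 a_i(a_{i-1}-a_{i+1})\Bigr|\le \sum_i |a_{i+1}|^2|a_i|^2 .
\]
\end{itemize}

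In both cases, normalize by rotation and scaling so that the $a_i$ are the oriented sides of a triangle with side lengths $t_1,t_2,t_3$. Then express everything through the side lengths:
\[
\operatorname{Re}(a_i\bar a_j)=\tfrac12\bigl(t_k^2-t_i^2-t_j^2\bigr),\qquad
\operatorname{Im}(a_i\bar a_j)=\pm 2\,\mathrm{Area}.
\]
Here $\mathrm{Area}$ satisfies Heron's formula
\[
16\,\mathrm{Area}^2=2\sum_i t_i^2t_j^2-\sum_i t_i^4 .
\]
Squaring, both inequalities become symmetric (in the second case, cyclic) polynomial inequalities in $x_i=t_i^2$, subject to the triangle constraints and, in the first case, the ordering constraints. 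Pulling a common phase factor out of the left side should make it a combination of terms $x_jx_k$ times one complex number. With that simplification, I expect each inequality to reduce to a quantity of the form ``Heron expression times a nonnegative factor'', or to a sum of squares.

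The main obstacle will be the full-weight case $h\ge m_1$, which is a genuine degree-eight polynomial inequality after squaring. My first attempt would be to substitute Ravi variables $x_i\mapsto(u+v)^2$ etc.\ with $u,v,w\ge0$, which automatically encodes the triangle inequalities. I would then look for a decomposition of
\[
Q^2-|U+2C|^2
\]
into monomials with nonnegative coefficients, falling back on checking the degenerate (collinear) triangles and the equilateral case separately if needed.
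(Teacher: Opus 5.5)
Your reduction to breakpoints is correct and is essentially the paper's. You use that $w(h)$ is piecewise affine, that $\mathcal K$ is convex, and that $h=0$ and $h=m_3$ are trivial. Your identity $U+2C=\sum_i w_ia_i(a_{i-1}-a_{i+1})$ is also right. The gap is that the two breakpoints carrying all the content of the lemma, $h=m_2$ and $h\ge m_1$, are never verified. For the first you leave the target as ``$\le m_2Q_0+\dots$''. For the second you only describe a computation you expect to work: Heron's formula, Ravi substitution, and a hoped-for decomposition with nonnegative coefficients. As written, this does not prove $w(m_2),w(m_1)\in\mathcal K$.

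The missing idea is to push the closure relation one step further. Since $a_i=-(a_{i-1}+a_{i+1})$, one has $a_i(a_{i-1}-a_{i+1})=a_{i+1}^2-a_{i-1}^2$. So at $h=m_2$ your reduced expression equals $(m_3-m_2)(a_{k+1}^2-a_{k-1}^2)$. Its modulus is at most $(m_2-m_3)(|a_{k-1}|^2+|a_{k+1}|^2)\le m_2(|a_{k-1}|^2+|a_{k+1}|^2)\le Q$. This is the paper's two-maxima case, which holds on all of $[m_3,m_2]$ at once. For the squared-side weights $w_i=|a_{i+1}|^2$, the cross term vanishes identically:
\[
C=\sum_i a_{i+1}\overline{a_{i+1}}\,a_ia_{i-1}=a_1a_2a_3\,\overline{(a_1+a_2+a_3)}=0 .
\]
Hence $|U+2C|=|U|\le Q$ by the triangle inequality, so what you call the main obstacle is a one-line consequence of closure. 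Your polynomial route is not wrong. Carried out for these weights, it gives $Q^2-|U+2C|^2=16\,\mathrm{Area}^2\sum_{i<j}|a_i|^2|a_j|^2$, exactly ``Heron times a nonnegative factor''. But until that computation, or the vanishing of $C$, is supplied, the key inequality remains unproved.
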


\begin{proof}
We first record the three elementary cases that occur as $h$ varies. If $c \ge 0$, then
$w = (c,c,c) \in \mathcal K$ as
\[
U(w)+2C(w) = c(a_1+a_2+a_3)^2 = 0 \le Q(w).
\]
Next, the closure relation gives
$a_i^2+2a_ia_{i-1} = (a_i + a_{i-1})^2 - a_{i-1}^2 = a_{i+1}^2-a_{i-1}^2,$
which gives
\begin{equation}\label{eq:U2C-weight-difference}
U(w)+2C(w) = \sum_i(w_{i-1}-w_{i+1})a_i^2.
\end{equation}
We now prove that any non-negative weight vector whose maximum is attained at least twice belongs to
$\mathcal K$. Indeed, if for instance $w_1 = w_3 = M \geq w_2 = m$, then
\[
|U+2C| = (M-m)|a_1^2-a_3^2|
 \leq (M-m)(|a_1|^2+|a_3|^2)
 \leq Q,
\]
and the other two cases follow by cyclic permutation. Finally, if $w_i = |a_{i+1}|^2$ for
$1 \leq i \leq 3$, then
\[
C = \sum_i|a_{i+1}|^2a_ia_{i-1}
 = a_1a_2a_3\sum_j\overline{a_j} = 0,
\]
and therefore $|U+2C| = |U| \leq Q$.

We use the above three cases to prove that $w(h) \in \mathcal K$. Order the three numbers $|a_i|^2$
as $m \leq M \leq L$. The cyclic shift in the definition of $w_i(h)$ is irrelevant.
\begin{itemize}
    \item If $0 \leq h \leq m$, then $w(h) = (h,h,h)$ and therefore $w(h) \in \mathcal K$.
    \item If $m \leq h \leq M$, at least two weights are equal to the common maximum $h$, so the
    preceding two-maxima case applies.
    \item If $M \leq h \leq L$, the weight vector $w(h)$ lies on the line segment joining its values
    at $h_0 = M$ and $h_1 = L$. The first weight vector $w(h_0) \in \mathcal K$ by the two-maxima
    case,
    while the second weight vector $w(h_1) = (|a_2|^2, |a_3|^2, |a_1|^2)$ is the squared-side vector
    just considered above. Convexity of $\mathcal K$ gives $w(h) \in \mathcal K$.
    \item If $h \geq L$, then $w(h) = (|a_2|^2, |a_3|^2, |a_1|^2)$ and therefore
    $w(h) \in \mathcal K$.
\end{itemize}

\end{proof}

\begin{lem}\label{lem:concave}
Assume $a_i\neq0$, put $t_i = |a_i|$, and let $0 < r < 2$. For the weights $w_i = t_{i+1}^r$ one has
\begin{equation}\label{eq:defect}
|U+2C|
 \leq Q-\left(1-\frac r2\right)(\min_i t_i)^r\sum_it_i^2
 < Q.
\end{equation}
\end{lem}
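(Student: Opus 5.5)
The plan is to write the concave power weights as a positive superposition of the clipped weights from Lemma~\ref{lem:clipped}, and then use that the map $w\mapsto U(w)+2C(w)$ is linear while $\mathcal K$ is a convex cone. Put $s_j = t_j^2 = |a_j|^2$ and $\varphi(s) = s^{r/2}$. Since $0<r<2$, $\varphi$ is concave on $[0,\infty)$, with $\varphi(0)=0$ and $\varphi'(s)\to 0$ as $s\to\infty$. I will use the representation
\[
\varphi(s) = \int_0^\infty \min\{s,h\}\,d\nu(h),
\qquad
d\nu(h) = -\varphi''(h)\,dh = \frac r2\Bigl(1-\frac r2\Bigr)h^{\frac r2-2}\,dh \geq 0 .
\]
To check it, split the integral at $h=s$ and integrate by parts:
\[
\int_0^s h(-\varphi'')\,dh + s\int_s^\infty(-\varphi'')\,dh
= \Bigl[-h\varphi'(h)\Bigr]_0^s + \varphi(s) + s\varphi'(s) = \varphi(s).
\]
The boundary terms vanish because $h\varphi'(h)\sim h^{r/2}\to 0$ at $0$ and $\varphi'(\infty)=0$.

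Applied with $s = s_{i+1}$, this gives, componentwise,
\[
w_i = t_{i+1}^r = \int_0^\infty w_i(h)\,d\nu(h),
\qquad
w_i(h) = \min\{|a_{i+1}|^2,h\}.
\]
The vector $w(h)$ is exactly the clipped weight of Lemma~\ref{lem:clipped}. Since $U$, $C$ and $Q$ are linear in $w$, they commute with this integral. The triangle inequality for integrals then gives
\[
|U(w)+2C(w)| \le \int_0^\infty |U(w(h))+2C(w(h))|\,d\nu(h).
\]
For every $h$, Lemma~\ref{lem:clipped} bounds the integrand by $Q(w(h))$, and integrating recovers $Q(w)$.

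The defect comes from the range $0\le h\le m$, where $m = \min_j s_j = (\min_i t_i)^2$. On this range $w(h) = (h,h,h)$, and as in the first case of the proof of Lemma~\ref{lem:clipped} we get $U+2C = h(a_1+a_2+a_3)^2 = 0$. So there the integrand is $0$, while we had been using the bound $Q(w(h)) = h\sum_i t_i^2$. The total saving is therefore
\[
\sum_i t_i^2\int_0^m h\,d\nu(h) = \sum_i t_i^2\bigl(\varphi(m)-m\varphi'(m)\bigr) = \Bigl(1-\frac r2\Bigr)m^{r/2}\sum_i t_i^2,
\]
using the same integration by parts as above. This is the claimed term $(1-\tfrac r2)(\min_i t_i)^r\sum_i t_i^2$. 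It is strictly positive because every $a_i\neq 0$ and $r<2$, which gives the final strict inequality $<Q$.

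The main point to get right is the integral representation, namely that $t^r$, as a function of $t^2$, is a positive mixture of the truncations $\min\{t^2,h\}$. This is what converts the discrete case analysis of Lemma~\ref{lem:clipped} into the estimate for the weights $w_i = t_{i+1}^r$. The remaining care is checking integrability near $h=0$ and $h=\infty$; since $r/2\in(0,1)$, all the integrals converge and the boundary terms vanish.
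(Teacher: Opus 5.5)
Your proposal is correct and follows essentially the same route as the paper. The paper uses the identical layer-cake identity $x^{\alpha}=\alpha(1-\alpha)\int_0^\infty\min\{x,h\}\,h^{\alpha-2}\,dh$ with $\alpha=r/2$ (your measure $-\varphi''(h)\,dh$ is exactly this density), applies Lemma~\ref{lem:clipped} pointwise in $h$, and obtains the defect from the range $0<h<(\min_i t_i)^2$, where all clipped weights coincide and $U+2C$ vanishes. Deriving the representation from $-\varphi''$ rather than by direct integration is only a cosmetic difference.
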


\begin{proof}
Put $\alpha = r/2\in(0,1)$. We use the following integral identity
\begin{equation}\label{eq:layer-cake}
x^\alpha = \alpha(1-\alpha)\int_0^\infty\min\{x,h\}h^{\alpha-2}\,dh,
\qquad x > 0.
\end{equation}
Indeed, splitting the integral at $h = x$ gives
\[
\int_0^\infty\min\{x,h\}h^{\alpha-2}\,dh
 = \int_0^x h^{\alpha-1}\,dh+x\int_x^\infty h^{\alpha-2}\,dh
 = \frac{x^\alpha}{\alpha(1-\alpha)}.
\]
Apply \eqref{eq:layer-cake} with $x = t_{i+1}^2$. If $w_i(h):=\min\{t_{i+1}^2,h\}$, then
\begin{equation}\label{eq:weights-layer-cake}
w_i = t_{i+1}^r = (t_{i+1}^2)^{\alpha}
 = \alpha(1-\alpha)\int_0^\infty w_i(h)h^{\alpha-2}\,dh.
\end{equation}
Because $U$, $C$ and $Q$ are linear in the weights,
\begin{align}
U(w)+2C(w)
&=\alpha(1-\alpha)\int_0^\infty
\bigl(U(w(h))+2C(w(h))\bigr)h^{\alpha-2}\,dh,
\label{eq:A-layer}\\
Q(w)
&=\alpha(1-\alpha)\int_0^\infty Q(w(h))h^{\alpha-2}\,dh.
\label{eq:Q-layer}
\end{align}
Lemma~\ref{lem:clipped} gives for all $h \ge 0$,
\begin{equation}\label{eq:clipped-ineq-expanded}
|U(w(h))+2C(w(h))| \leq Q(w(h)).
\end{equation}
This gives $|U(w)+2C(w)| \leq Q(w)$ after integration. To obtain the strict inequality, we put
$m = \min_i t_i > 0$. For $0 < h < m^2$, all three weights $w_i(h)$ are equal to $h$, and hence
\[
U(w(h))+2C(w(h)) = h(a_1+a_2+a_3)^2 = 0,
\quad \text{and} \quad
Q(w(h)) = h\sum_it_i^2 > 0.
\]
Thus, we can start the integral in \eqref{eq:A-layer} from $m^2$, whereas the corresponding positive
part of \eqref{eq:Q-layer} remains. Using \eqref{eq:clipped-ineq-expanded},
\begin{align*}
|U(w)+2C(w)|
& \leq \alpha(1-\alpha)\int_{m^2}^\infty Q(w(h))h^{\alpha-2}\,dh\\
&=Q-\alpha(1-\alpha)\int_0^{m^2}h\,h^{\alpha-2}\,dh\sum_it_i^2\\
&=Q-(1-\alpha)m^{2\alpha}\sum_it_i^2 < Q(w).
\end{align*}
Since $\alpha = r/2$, this is \eqref{eq:defect} and thus
$w = (|a_2|^r, |a_3|^r, |a_1|^r) \in \mathcal K$.
\end{proof}

We now return to the weights coming from the Jouanolou critical point. Lemma~\ref{lem:concave},
together with $|U| \leq Q$, gives from \eqref{eq:ratio}
\begin{align*}
\frac{|(F_d)_{tt}|}{(F_d)_{t\bar t}} =& \frac{|d(U+2C)-U|}{(d+1)Q} \leq \frac{d}{d+1}
\frac{|U+2C|}{Q} + \frac{1}{d+1}\\
 \leq & \frac{d}{d+1}\left(1 - \left(1-\frac{r}{2}\right) \frac{(\min_i t_i)^r \sum_i t_i^2}{Q}
\right) + \frac{1}{d+1} \\
 \leq & 1-\frac d{d+1}\left(1-\frac r2\right)
\frac{(\min_i t_i)^r\sum_i t_i^2}{Q}.
\end{align*}
Thus we have
\begin{equation}\label{eq:ratio-gap}
    \frac{|(F_d)_{tt}|}{(F_d)_{t\bar t}} \le 1-\frac d{d+1}\left(1-\frac r2\right)
\frac{(\min_i t_i)^r\sum_i t_i^2}{Q}.
\end{equation}
The right-hand side is strictly smaller than $1$ whenever all three $t_i$ are non-zero. Hence, the
$\ell^{d+1}$ potential has positive-definite leafwise real Hessian at every non-axis critical point.

It remains to identify what happens when a coordinate vanishes. If exactly one coordinate is zero
and the other two are non-zero, then \eqref{eq:Ft} contains exactly one non-zero summand, so the
point is not critical. Thus a critical point on a coordinate hyperplane lies on a coordinate axis.
At $e_1 = (1,0,0)$, the vector field $V_d$ satisfies $V_d(e_1) = (0,0,1)$. Since $p > 2$, the second
real
derivatives of $|w|^p$ vanish at $w = 0$, and direct substitution gives
\[
(F_d)_t(e_1) = 0,
\qquad
(F_d)_{t\bar t}(e_1) = 0,
\quad \text{and} \ \
(F_d)_{tt}(e_1) = 0.
\]
The other two axes are identical by cyclic symmetry. Thus, the only degenerate critical directions
of the potential $F_d$ are the three coordinate directions.

For the perturbation in the next section, we shall only need a uniform margin on sets which stay
away from these directions. This will follow immediately from compactness.

\begin{cor}\label{cor:hessian-gap}
Fix $d \geq 2$. Let
$K \subset \partial \mathbb B = \{(z_1, z_2, z_3) \in \mathbb C^3 : S(z) = |z_1|^2 + |z_2|^2 + |z_3|^2 = 1\}$
be compact and disjoint from the three coordinate axes. Then
there is a number $\gamma_{d,K} > 0$ such that, at every leafwise critical point $q\in K$ of $F_d$,
\begin{equation}\label{eq:compact-hessian-gap}
(F_d)_{t\bar t}(q)-|(F_d)_{tt}(q)| \geq \gamma_{d,K}.
\end{equation}
\end{cor}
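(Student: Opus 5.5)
The plan is a compactness argument applied to the explicit gap \eqref{eq:ratio-gap}. Let $\Sigma_K$ be the set of leafwise critical points of $F_d$ lying in $K$, that is, the points $q\in K$ with $(F_d)_t(q)=0$. Here $(F_d)_t$ is given by \eqref{eq:Ft} as the polynomial expression $\frac{d+1}{2}\sum_i a_i$ in $z,\bar z$, evaluated with $Z'=V_d(z)$. It is therefore continuous on $\mathbb C^3$: for $d\ge 2$ the functions $|z_i|^{d-1}\bar z_i$ are continuous. Consequently $\Sigma_K$ is closed in $K$, hence compact. The one case to rule out is $\Sigma_K=\emptyset$, where the statement holds vacuously with any $\gamma_{d,K}$.

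Next, every $q\in\Sigma_K$ has all three coordinates non-zero. Indeed $q$ avoids the axes by hypothesis, and the discussion after \eqref{eq:ratio-gap} shows that a critical point with exactly one vanishing coordinate does not exist. Hence $t_i=(|z_i||z_{i+1}|)^d>0$ and $R_i>0$ at every point of $\Sigma_K$. On the open set $\{z_1z_2z_3\neq0\}$, the functions $(F_d)_{t\bar t}$ and $(F_d)_{tt}$ are continuous; this is clear from \eqref{eq:Ftbart} and \eqref{eq:Ftt}. In fact they extend continuously to all of $\mathbb C^3$ for $d\ge 2$, but we only need continuity on a neighbourhood of $\Sigma_K$.

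The function $\Phi(q)=(F_d)_{t\bar t}(q)-|(F_d)_{tt}(q)|$ is continuous on the compact set $\Sigma_K$, so it attains its minimum there. It therefore suffices to show that $\Phi(q)>0$ for each $q\in\Sigma_K$. This follows from \eqref{eq:ratio-gap}. With all $t_i>0$, the right-hand side of \eqref{eq:ratio-gap} is strictly less than $1$. Moreover $(F_d)_{t\bar t}=\frac{(d+1)^2}{4R_1R_2R_3}Q>0$, because $Q=\sum_i t_{i+1}^r t_i^2>0$. Multiplying \eqref{eq:ratio-gap} by $(F_d)_{t\bar t}$ gives $\Phi(q)>0$. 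We then set $\gamma_{d,K}=\min_{\Sigma_K}\Phi>0$.

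If one prefers an explicit bound, \eqref{eq:ratio-gap} gives
\[
\Phi \;\ge\; \frac{d(d+1)}{4R_1R_2R_3}\Bigl(1-\frac r2\Bigr)(\min_i t_i)^r\sum_i t_i^2 .
\]
On $K$ the $R_i$ are bounded above by $1$, and $\min_i t_i$ is bounded below on $\Sigma_K$ by compactness together with the non-vanishing of the coordinates there. The only delicate point in the whole argument is this step, namely that the coordinates stay uniformly away from zero on $\Sigma_K$, so that continuity applies on a neighbourhood of $\Sigma_K$. It reduces entirely to the observation that critical points on coordinate hyperplanes lie on the axes, which $K$ avoids.
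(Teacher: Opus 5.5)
Your proposal is correct and follows the same route as the paper. You use compactness of $\Sigma_K=\{q\in K:(F_d)_t(q)=0\}$, strict positivity of $(F_d)_{t\bar t}-|(F_d)_{tt}|$ at every non-axis critical point via \eqref{eq:ratio-gap}, and then take the minimum of this continuous function over $\Sigma_K$. Your additional points, namely that critical points in $K$ have no vanishing coordinate and the explicit lower bound, simply spell out details the paper leaves implicit.
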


\begin{proof}
The critical set
\[
\Sigma_K = \{q\in K:(F_d)_t(q) = 0\}
\]
is compact. If it is empty there is nothing to prove. Otherwise every point of $\Sigma_K$ is a
non-axis critical point, and hence \eqref{eq:ratio-gap} gives
\[
(F_d)_{t\bar t}-|(F_d)_{tt}| > 0
\qquad\text{on }\Sigma_K.
\]
The left-hand side is continuous on $K$, so its minimum on $\Sigma_K$ is positive. Taking this
minimum for $\gamma_{d,K}$ proves the claim.
\end{proof}

\begin{rem}[The coordinate degeneracies]\label{rem:coordinate-degeneracies}
The choice $p = d+1$ has a feature which is already visible in degree two. For $d = 2$, the
Euclidean
choice $p = 2$ used in \cite{AD25} does not have the coordinate degeneracies above, whereas our
choice
$p = 3$ does. On the other hand, for $p = d+1$ the Hessian estimate away from the axes follows
analytically from the triangle argument, and Corollary~\ref{cor:hessian-gap} supplies the uniform
margin needed on every fixed compact set avoiding them. In Section~5, we modify the potential near
the three coordinate directions and then smooth it, while preserving the $P_B$ inequality. The same
argument applies for every $d \geq 2$.
\end{rem}

\section{Removing the coordinate degeneracies}

Put $S = |z_1|^2+|z_2|^2+|z_3|^2$, and for $\varepsilon > 0$ define
\[
F_{d,\varepsilon}:=F_d+\varepsilon S^{\frac{d+1}{2}}.
\]
The perturbation will lift the three zero Hessians of the potential $F_d$. The axis neighbourhoods
below are chosen once and for all, depending only on $d$, and are independent of $\varepsilon$.

\begin{lem}\label{lem:axis}
In the projective chart $z_1 = 1$, write $y = z_2$, $z = z_3$, and put $\rho_d = (6d)^{-1/d}$. For
$|y|,|z| < \rho_d$ and $(y,z)\neq(0,0)$ one has
\[
(F_d)_{t\bar t} > |(F_d)_{tt}|,
\]
while both sides vanish when $y = z = 0$. Moreover, for $G = S^{\frac{d+1}{2}}$,
\[
G_{t\bar t} > |G_{tt}|
\]
throughout the neighbourhood $|y|,|z| < \rho_d$. Consequently, every leafwise critical point of
$F_{d,\varepsilon} = F_d+\varepsilon G$ lying in this fixed neighbourhood has positive-definite real
Hessian, for every $\varepsilon > 0$. The same statement holds near the other two coordinate points.
\end{lem}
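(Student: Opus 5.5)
The plan is a direct computation in the affine chart $z_1=1$. By balanced homogeneity, both $H_{t\bar t}$ and $|H_{tt}|$ scale by the same positive factor under $z\mapsto\lambda z$ (the leaf parameter rescales by $\lambda^{1-d}$). So it suffices to compare them at the representative $Z=(1,y,z)$, where $Z'=V_d(Z)=(y^d,z^d,1)$. From \eqref{eq:curve-first} the second derivatives there are
\[
Z''=\bigl(d\,y^{d-1}z^d,\; d\,z^{d-1},\; d\,y^d\bigr).
\]
I would expand $(F_d)_{t\bar t}$ and $(F_d)_{tt}$ coordinate by coordinate using \eqref{eq:wirtinger-power} and \eqref{eq:hessian-acceleration-schematic}. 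This gives
\[
(F_d)_{t\bar t}=\tfrac{(d+1)^2}{4}\bigl(|y|^{2d}+|y|^{d-1}|z|^{2d}+|z|^{d-1}\bigr).
\]
The term $(F_d)_{tt}$ is the sum of two groups.
\begin{itemize}
\item \emph{Diagonal terms} $\tfrac{(d+1)(d-1)}{4}|z_i|^{d-3}\bar z_i^{\,2}(z_i')^2$. Their moduli are exactly $\tfrac{d-1}{d+1}$ times the corresponding summands of $(F_d)_{t\bar t}$.
\item \emph{Acceleration terms} $\tfrac{(d+1)}{2}|z_i|^{d-1}\bar z_i z_i''$. Their moduli sum to $\tfrac{d(d+1)}{2}\bigl(|y|^{d-1}|z|^d+|y|^d|z|^{d-1}+|z|^d|y|^d\bigr)$.
\end{itemize}
Note that the factors $\bar z_i^{\,2}$ make the diagonal terms harmless even when $d=2$.

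After the triangle inequality, the desired strict inequality reduces to showing that the acceleration terms are dominated by the leftover margin. Concretely, it suffices that
\[
d\bigl(|y|^{d-1}|z|^d+|y|^d|z|^{d-1}+|y|^d|z|^d\bigr) < |y|^{2d}+|y|^{d-1}|z|^{2d}+|z|^{d-1}.
\]
The point is that every cross term carries a factor $|z|^{d-1}$ times something of size at most $\rho_d^d=1/(6d)$. Indeed $|y|^{d-1}|z|$, $|y|^d$ and $|y|^d|z|$ are all $<\rho_d^d$. Hence the left side is at most $\tfrac12|z|^{d-1}$, which is strictly less than the right side when $z\neq0$. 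When $z=0$ and $y\neq0$, all cross terms vanish. In that case $(F_d)_{t\bar t}$ contains $|y|^{2d}>0$, and the diagonal fraction $\tfrac{d-1}{d+1}<1$ already gives strictness. At $y=z=0$ both sides vanish, as computed before the lemma.

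For $G=S^{k}$ with $k=(d+1)/2$, write $G=\varphi(S)$ with $\varphi(s)=s^k$. Then
\[
G_{t\bar t}=\varphi''|S_t|^2+\varphi' S_{t\bar t},\qquad G_{tt}=\varphi''S_t^2+\varphi' S_{tt},
\]
where $S_{t\bar t}=|V_d|^2\geq 1$ (the third component of $V_d$ is $1$) and $S_{tt}=\sum_i\bar z_i z_i''$. Since $\varphi''\geq0$ and $\varphi'>0$, it is enough to check $|S_{tt}|<1$. This holds because
\[
|S_{tt}|\le d\bigl(|y|^{d-1}|z|^d+|y||z|^{d-1}+|z||y|^d\bigr)<3d\rho_d^{\,d}=\tfrac12 .
\]
The consequence for $F_{d,\varepsilon}$ is then immediate. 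Hessians add, so
\[
(F_{d,\varepsilon})_{t\bar t}-|(F_{d,\varepsilon})_{tt}|\ \geq\ \bigl((F_d)_{t\bar t}-|(F_d)_{tt}|\bigr)+\varepsilon\bigl(G_{t\bar t}-|G_{tt}|\bigr).
\]
The first bracket is $\ge0$ and the second is $>0$ everywhere in the neighbourhood, so the left side is positive at every point there, critical or not. The other two axes follow by the cyclic symmetry of $V_d$ and of both potentials.

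The main obstacle is the bookkeeping in the cross terms. One has to make sure that each acceleration term really factors through $|z|^{d-1}$ with a small coefficient, in particular the term $|y|^{d-1}|z|^d$, where one writes $|y|^{d-1}|z|^d=|z|^{d-1}\cdot|y|^{d-1}|z|$. One also has to check that the constant $\rho_d=(6d)^{-1/d}$ leaves room for the factor $d$ and the three terms.
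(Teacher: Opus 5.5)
Your proposal is correct and follows the paper's proof essentially step for step. It uses the same splitting of $(F_d)_{tt}$ into diagonal terms, whose moduli are $\tfrac{d-1}{d+1}$ times the summands of $(F_d)_{t\bar t}$, and acceleration terms, each written as $|z|^{d-1}$ times a factor below $\rho_d^d=1/(6d)$, together with the same chain-rule reduction of $G_{t\bar t}-|G_{tt}|$ to $S_{t\bar t}-|S_{tt}|\geq 1-\tfrac12$. Your explicit homogeneity reduction to the chart, the case split $z\neq0$ versus $z=0$ (the paper handles both at once via positivity of $|y|^{2d}+|y|^{d-1}|z|^{2d}+|z|^{d-1}$), and the cyclic-symmetry remark for the other two axes are harmless additions that the paper leaves implicit.
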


\begin{proof}
Let $s = \frac{d+1}{2}$ and
\[
L = |y|^{2d}+|y|^{d-1}|z|^{2d}+|z|^{d-1}.
\]
Formula \eqref{eq:Ftbart} gives directly
\begin{equation}\label{eq:axis-Ftbart}
(F_d)_{t\bar t} = s^2L.
\end{equation}
We now separate the two contributions to $(F_d)_{tt}$ described in
\eqref{eq:hessian-acceleration-schematic}. In the chart $z_1 = 1$, the first term is
\[
P = s(s-1)\left(
 y^{2d}+|y|^{d-3}\bar y^{\,2}z^{2d}+|z|^{d-3}\bar z^{\,2}
\right).
\]
Here and below, terms with a negative exponent are understood by their continuous extensions when
the corresponding factor vanishes.
Taking moduli term by term gives
\begin{equation}\label{eq:axis-ambient-bound}
|P| \leq s(s-1)L = \frac{(d+1)(d-1)}4L.
\end{equation}
The second term is
\[
A = sd\left(
 y^{d-1}z^d+|y|^{d-1}\bar y\,z^{d-1}+|z|^{d-1}\bar z\,y^d
\right),
\]
and therefore
\begin{equation}\label{eq:axis-acc-bound-start}
|A| \leq sd\bigl(
|y|^{d-1}|z|^d+|y|^d|z|^{d-1}+|y|^d|z|^d
\bigr).
\end{equation}
If $|y|,|z| < \rho_d < 1$, each of the three products in the above inequality is at most
$\rho_d^d|z|^{d-1} \leq \rho_d^dL$. Since $\rho_d^d = (6d)^{-1}$, we get
\begin{equation}\label{eq:axis-acc-bound}
|A| \leq 3sd\rho_d^dL = \frac{s}{2}L = \frac{(d+1)}{4} L.
\end{equation}
Combining \eqref{eq:axis-Ftbart}, \eqref{eq:axis-ambient-bound} and \eqref{eq:axis-acc-bound},
\[
|(F_d)_{tt}|
 \leq \left(s(s-1)+\frac s2\right)L
 = s\left(s-\frac12\right)L
 < s^2L = (F_d)_{t\bar t}
\]
whenever $(y,z)\neq(0,0)$. At $(y,z) = (0,0)$, both derivatives vanish, as noted at the end of
Section~4.

\noindent We next treat the Euclidean homogeneous correction. Along the Jouanolou field,
\begin{align}
S_t&=\sum_i\bar z_i z_{i+1}^d,\label{eq:S-t}\\
S_{t\bar t}&=\sum_i|z_i|^{2d},\label{eq:S-tbart}\\
S_{tt}&=d\sum_i\bar z_i z_{i+1}^{d-1}z_{i+2}^d.\label{eq:S-tt}
\end{align}
In the chart $z_1 = 1$, \eqref{eq:S-tbart} gives $S_{t\bar t} \geq 1$. Moreover
\[
S_{tt} = d\bigl(y^{d-1}z^d+\bar y z^{d-1}+\bar z y^d\bigr),
\]
so $|S_{tt}| \leq 3d\rho_d^d = \frac{1}{2}$. Let $G = S^s$. The chain rule gives
\begin{align}
G_{t\bar t}
&=sS^{s-1}S_{t\bar t}+s(s-1)S^{s-2}|S_t|^2,
\label{eq:G-tbart}\\
G_{tt}
&=sS^{s-1}S_{tt}+s(s-1)S^{s-2}S_t^2.
\label{eq:G-tt}
\end{align}
Since $|S_t^2| = |S_t|^2$, we have
\begin{align}
G_{t\bar t}-|G_{tt}|
& \geq sS^{s-1}\bigl(S_{t\bar t}-|S_{tt}|\bigr)\\
& \geq \frac{s}{2}S^{s-1} > 0.
\label{eq:G-axis-gap}
\end{align}
At the axis point itself, $S = 1$, $S_t = S_{tt} = 0$ and $S_{t\bar t} = 1$, so $G_{tt} = 0$ and
$G_{t\bar t} = s > 0$. Finally, for every $\varepsilon > 0$,
\begin{align*}
&(F_{d,\varepsilon})_{t\bar t}-|(F_{d,\varepsilon})_{tt}|\\
&\qquad \geq \bigl((F_d)_{t\bar t}-|(F_d)_{tt}|\bigr)
+\varepsilon\bigl(G_{t\bar t}-|G_{tt}|\bigr) > 0
\end{align*}
throughout the fixed neighbourhood. At a critical point, the inequality is exactly the
positive-definiteness criterion for the real Hessian. Thus, any critical point which moves into, or
is newly created inside the neighbourhood under the perturbation, automatically has the required
Hessian sign.
\end{proof}

\begin{prop}\label{prop:eps}
For $d \geq 2$, and $\varepsilon > 0$ small enough, the potential $F_{d,\varepsilon}$ has
positive-definite real Hessian at every leafwise critical point.
\end{prop}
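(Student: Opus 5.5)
We split the unit sphere $\partial\mathbb B$ into the three fixed axis neighbourhoods of Lemma~\ref{lem:axis} and their complement $K$. Work on $\partial\mathbb B$ throughout. This is legitimate because $(F_{d,\varepsilon})_t$, $(F_{d,\varepsilon})_{t\bar t}$ and $(F_{d,\varepsilon})_{tt}$ are homogeneous under $z\mapsto\lambda z$ up to non-zero factors (moduli and phases), so both the critical condition and the inequality $H_{t\bar t}>|H_{tt}|$ are invariant on complex lines. Let $\Omega$ be the union of the open sets $\{|z_j|^{-1}|z_{j+1}|,\,|z_j|^{-1}|z_{j+2}|<\rho_d\}$ around the three axes. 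Set $K=\partial\mathbb B\setminus\pi^{-1}\pi(\Omega)$; it is compact and disjoint from the axes. Lemma~\ref{lem:axis} already gives the conclusion at every critical point of $F_{d,\varepsilon}$ in $\Omega$, for every $\varepsilon>0$. It therefore remains to treat critical points in $K$.

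On $K$ we use a perturbation argument around the critical set of $F_d$. The function $\Phi_\varepsilon=(F_{d,\varepsilon})_{t\bar t}-|(F_{d,\varepsilon})_{tt}|$ differs from $\Phi_0$ by at most $C\varepsilon$ uniformly on $K$. Likewise, $(F_{d,\varepsilon})_t$ differs from $(F_d)_t$ by at most $C\varepsilon$, with $C$ the sup of $|G_t|,|G_{t\bar t}|,|G_{tt}|$ on the sphere. Since $F_d$ is $C^2$ away from the axes and $K$ avoids them, these are continuous functions on $K$.

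Corollary~\ref{cor:hessian-gap} gives $\Phi_0\ge\gamma_{d,K}$ on $\Sigma_K=\{(F_d)_t=0\}\cap K$. By continuity there is an open neighbourhood $\mathcal N\supset\Sigma_K$ in $K$ on which $\Phi_0\ge\gamma_{d,K}/2$. On the compact set $K\setminus\mathcal N$ the function $|(F_d)_t|$ is positive, so it has a minimum $\delta>0$. Now choose $\varepsilon<\min\{\delta/C,\gamma_{d,K}/(4C)\}$. Then $(F_{d,\varepsilon})_t\neq0$ on $K\setminus\mathcal N$, so every critical point of $F_{d,\varepsilon}$ in $K$ lies in $\mathcal N$. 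There $\Phi_\varepsilon\ge\gamma_{d,K}/4>0$, which is exactly the positive-definiteness criterion \eqref{eq:PBineq}. If $\Sigma_K$ is empty, take $\mathcal N=\emptyset$ and the same argument shows that $F_{d,\varepsilon}$ has no critical points in $K$.

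The main obstacle is bookkeeping rather than analysis. First, we must check that the axis neighbourhoods of Lemma~\ref{lem:axis}, which are given in affine charts, and $K$ together cover the sphere up to the $S^1$-action. Second, we must check that $F_d$ is genuinely $C^2$ on $K$. Points of $K$ may have one coordinate equal to zero, where $|w|^{d+1}$ is only $C^2$ if $d+1\ge2$, which is fine; for $d=2$ it is exactly $C^2$, so continuity of the second derivatives, and hence the uniform approximation above, holds. With those checks done, the proof combines Lemma~\ref{lem:axis}, Corollary~\ref{cor:hessian-gap} and compactness.
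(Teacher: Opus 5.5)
Your proposal is correct and is essentially the paper's proof. You apply Lemma~\ref{lem:axis} on the fixed axis cones for every $\varepsilon>0$; on the compact complement $K$ you combine Corollary~\ref{cor:hessian-gap} with a neighbourhood of $\Sigma_K$ and a positive lower bound for $|(F_d)_t|$ off it, making the $C^2$-closeness explicit through $F_{d,\varepsilon}=F_d+\varepsilon G$. One cosmetic slip: $|\Phi_\varepsilon-\Phi_0|\le\varepsilon\bigl(|G_{t\bar t}|+|G_{tt}|\bigr)$, so either let $C$ bound that sum or halve your threshold to get the stated $\gamma_{d,K}/4$; strict positivity of $\Phi_\varepsilon$ holds either way.
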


\begin{proof}
It is enough to work on the unit sphere
$\partial \mathbb B = \{(z_1, z_2, z_3) \in \mathbb C^3 : S(z) = 1\}$ after normalizing by $S$.
Let $U$ be the union of the three fixed axis neighbourhoods
from Lemma~\ref{lem:axis}, and put
\[
K = \partial \mathbb{B}\setminus U.
\]
The above lemma already gives the required Hessian sign throughout $U$ for every $\varepsilon > 0$. Next,
consider the critical set of the unperturbed potential in $K$,
\[
\Sigma_K = \{q\in K:(F_d)_t(q) = 0\}.
\]
If $\Sigma_K$ is empty, then $|(F_d)_t|$ has a positive minimum on $K$, and for sufficiently small
$\varepsilon$ the perturbed potential has no critical point there. Suppose therefore that $\Sigma_K$
is non-empty. By Corollary~\ref{cor:hessian-gap}, there is $\gamma > 0$ such that on $\Sigma_K$, we
have
\[
(F_d)_{t\bar t}-|(F_d)_{tt}| \geq \gamma.
\]
Choose a neighbourhood $N$ of $\Sigma_K$ in $K$ on which the same gap is larger than $\gamma/2$. On
the compact set $K\setminus N$, the function $|(F_d)_t|$ has a positive minimum, say $c > 0$.

\noindent Since $F_{d,\varepsilon} = F_d+\varepsilon S^{p/2}$ converges to $F_d$ in $C^2(A)$ as
$\varepsilon\downarrow0$, for all sufficiently small $\varepsilon$ we have
\[
|(F_{d,\varepsilon})_t| > c/2
\qquad\text{on }K\setminus N,
\]
while on $N$,
\[
(F_{d,\varepsilon})_{t\bar t}-|(F_{d,\varepsilon})_{tt}| > \gamma/4.
\]
Thus no critical point of $F_{d,\varepsilon}$ occurs in $K\setminus N$, and every critical point in
$N$ has positive-definite real Hessian. Together with Lemma~\ref{lem:axis}, this proves the
proposition.
\end{proof}

If $d$ is odd, $F_{d,\varepsilon}$ is already smooth. Let $\eta > 0$. For all $d \geq 2$, and in
particular uniformly across even and odd $d$, we use the explicit regularization
\begin{equation}\label{eq:smoothing}
F_{d,\varepsilon,\eta}(z)
 = \sum_{j = 1}^3\bigl(|z_j|^2+\eta S\bigr)^{\frac{d+1}{2}}+\varepsilon S^{\frac{d+1}{2}}.
\end{equation}
For every $\eta > 0$, $F_{d,\varepsilon, \eta}$ is $C^\infty$ on $\mathbb C^3\setminus\{0\}$ and
balanced homogeneous of degree $d+1$. Notice that the roles of the two parameters are different:
$\varepsilon$ repairs the axis degeneracies, while $\eta$ provides smoothness across the coordinate
hyperplanes.

\begin{lem}\label{lem:C2}
For $d \ge 2$ and fixed $\varepsilon > 0$,
\[
F_{d,\varepsilon,\eta}\longrightarrow F_{d,\varepsilon}
\]
in $C^2$ on every compact annulus $\{z \in \mathbb C^3 : 0 < c \leq S(z) \leq C\}$ as
$\eta\downarrow0$.
\end{lem}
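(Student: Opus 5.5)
The plan is to compare the two potentials term by term and reduce everything to uniform continuity of a few explicit one-variable expressions. The summand $\varepsilon S^{(d+1)/2}$ is common to $F_{d,\varepsilon,\eta}$ and $F_{d,\varepsilon}$ and cancels. So it suffices to show that, for each fixed $j$,
\[
\Phi_\eta:=\bigl(|z_j|^2+\eta S\bigr)^{s}\longrightarrow |z_j|^{2s}=|z_j|^{d+1},\qquad s=\tfrac{d+1}{2}\ge\tfrac32,
\]
in $C^2$ on $A=\{c\le S\le C\}$. First I will note that the limit is genuinely $C^2$ on $\mathbb C^3\setminus\{0\}$, since $|w|^{p}$ is $C^2$ for $p=d+1\ge3$. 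Its derivatives up to order two are given by \eqref{eq:wirtinger-power} for $w\neq0$, with continuous extension by $0$ at $w=0$ (the order-two coefficients are $|w|^{p-2}$-type quantities with $p-2\ge1$).

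Next, write $u=|z_j|^2$ and $b=\eta S$, so that $b\in[\eta c,\eta C]$ on $A$. Denote by $D$ any first-order Wirtinger derivative. The chain rule gives
\begin{align*}
D\Phi_\eta &= s(u+b)^{s-1}(Du+\eta DS),\\
D'D\Phi_\eta &= s(s-1)(u+b)^{s-2}(Du+\eta DS)(D'u+\eta D'S)+s(u+b)^{s-1}(D'Du+\eta D'DS).
\end{align*}
The corresponding formulas for $u^s$ are the same with $b=0$ and $\eta=0$. The quantities $Du$, $D'Du$, $DS$ and $D'DS$ are polynomials in $z,\bar z$, hence bounded on $A$, and the terms carrying a factor $\eta$ tend to $0$ uniformly. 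So the problem reduces to the uniform convergence, as $b\downarrow0$ uniformly for $u\in[0,C]$, of three expressions:
\[
(u+b)^{s}\to u^{s},\qquad (u+b)^{s-1}\to u^{s-1},\qquad (u+b)^{s-2}\,Du\,D'u\to u^{s-2}\,Du\,D'u .
\]
The first two are immediate, because $x\mapsto x^{s}$ and $x\mapsto x^{s-1}$ are uniformly continuous on $[0,C+1]$ when $s\ge1$.

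The only real obstacle is the third expression when $s<2$, that is $d=2$, where $s-2=-\tfrac12$ and the coefficient $(u+b)^{s-2}$ blows up near the hyperplane $z_j=0$. To handle it I will use $|Du|,|D'u|\le|z_j|=u^{1/2}$. This allows me to write
\[
(u+b)^{s-2}Du\,D'u=k(u,b)\,\frac{Du\,D'u}{u},\qquad k(u,b)=u(u+b)^{s-2},
\]
where the factor $\frac{Du\,D'u}{u}$ (for instance $\bar z_j^2/|z_j|^2$) is bounded by $1$ in modulus. The function $k$ satisfies $0\le k\le (u+b)^{s-1}$. It therefore extends continuously by $0$ to the point $(0,0)$, and is continuous on the compact set $[0,C]\times[0,C]$ (with $s>1$). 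Uniform continuity of $k$ then gives $\sup_u|k(u,b)-k(u,0)|\to0$ as $b\to0$.

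At $u=0$ the term is $0$ for every $b$ (because $Du$ vanishes) and likewise for the limit, while at $u>0$ the limit term is exactly the second-order term of $u^{s}$. Hence the difference of the third expressions tends to $0$ uniformly on $A$. When $s\ge2$ no such trick is needed, since $x^{s-2}$ is itself uniformly continuous on bounded intervals. Summing over $j=1,2,3$ and over all first- and second-order Wirtinger derivatives yields the claimed $C^2$ convergence on $A$.
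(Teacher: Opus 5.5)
Your argument is essentially the paper's. You apply the chain rule to $(|z_j|^2+\eta S)^{s}$ and isolate $(u+b)^{s-2}Du\,D'u$ at $d=2$ as the only delicate term, which you control via $|Du|\le u^{1/2}$. Your uniform-continuity argument for $k(u,b)=u(u+b)^{s-2}$ is a qualitative version of the paper's explicit bound $r\,|q_\eta^{-1/2}-r^{-1/2}|\le C\eta^{1/2}$.

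One step is too quick, though. You dismiss the terms carrying a factor $\eta$ because $DS$, $D'S$, $D'DS$ are bounded. That suffices for $d\ge3$. For $d=2$, however, the $\eta$-terms inside $s(s-1)(u+b)^{-1/2}(Du+\eta DS)(D'u+\eta D'S)$ are multiplied by $(u+b)^{-1/2}$. On the annulus this coefficient is only bounded by $(\eta c)^{-1/2}$, which blows up as $\eta\downarrow0$; it is the same obstacle you identify later, appearing here too. You need two estimates:
\begin{itemize}
\item $|Du|\le u^{1/2}\le (u+b)^{1/2}$, which bounds the cross terms $\eta(u+b)^{-1/2}Du\,D'S$ by $\eta\,|D'S|=O(\eta)$;
\item $b=\eta S\ge \eta c$, which bounds $\eta^2(u+b)^{-1/2}DS\,D'S$ by $O(\eta^{3/2})$.
\end{itemize}
These are exactly the two estimates the paper records for the cross terms and the $\eta^2 DS\otimes DS$ term. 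With them added, your proof is complete.
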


\begin{proof}
Write $s = \frac{d+1}{2} \geq \frac{3}{2}$. On a compact annulus $\tilde A = \{c \leq S \leq C\}$,
$S$
and
its first two real derivatives are uniformly bounded. For one summand put
\[
r = |z_j|^2,
\qquad
q_\eta = r+\eta S.
\]
Then $q_\eta\to r$ in $C^\infty(\tilde A)$ and
\begin{align}
D(q_\eta^s)&=s q_\eta^{s-1}Dq_\eta,\label{eq:C2-first}\\
D^2(q_\eta^s)&=s q_\eta^{s-1}D^2q_\eta
+s(s-1)q_\eta^{s-2}Dq_\eta\otimes Dq_\eta.\label{eq:C2-second}
\end{align}
The zeroth and first derivatives converge uniformly because $s > 1$, $q_\eta-r = O(\eta)$, and
$Dr = O(r^{1/2})$.

For the second derivatives, the first term in \eqref{eq:C2-second} converges uniformly since
$q_\eta^{s-1}\to r^{s-1}$ uniformly and $D^2q_\eta = D^2r+O(\eta)$. If $s \geq 2$ the last term is
also
immediate, because the power $q_\eta^{s-2}$ is non-singular. The only endpoint requiring an estimate
is $s = 3/2$, which occurs when $d = 2$. In that case
\[
q_\eta^{-1/2}Dq_\eta\otimes Dq_\eta-r^{-1/2}Dr\otimes Dr
\]
is decomposed by adding and subtracting $q_\eta^{-1/2}Dr\otimes Dr$. Since $|Dr| \leq C r^{1/2}$ and
$q_\eta \geq r+c\eta$ on $A$, the coefficient-difference term is bounded by
\[
Cr\left|q_\eta^{-1/2}-r^{-1/2}\right| \leq C\eta^{1/2}.
\]
The cross terms are bounded by
\[
C\eta q_\eta^{-1/2}r^{1/2} \leq C\eta,
\]
and the $\eta^2DS\otimes DS$ term by $C\eta^{3/2}$. Hence the last term in \eqref{eq:C2-second} also
converges uniformly. Thus each summand converges in $C^2(\tilde A)$, and summing over $j$ proves the
lemma.
\end{proof}

\begin{prop}\label{prop:eta}
Fix $d \geq 2$ and choose $\varepsilon > 0$ as in Proposition~\ref{prop:eps}. Then, for all
sufficiently
small $\eta > 0$, every leafwise critical point of $F_{d,\varepsilon,\eta}$ satisfies
\[
(F_{d,\varepsilon,\eta})_{t\bar t} > |(F_{d,\varepsilon,\eta})_{tt}|.
\]
\end{prop}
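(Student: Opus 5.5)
Since $\varepsilon > 0$ is now fixed, the plan is to copy the perturbation argument of Proposition~\ref{prop:eps}, with $F_{d,\varepsilon}$ in the role of the unperturbed potential and Lemma~\ref{lem:C2} supplying the $C^2$ convergence. Every quantity involved, $(\cdot)_t$, $(\cdot)_{t\bar t}$ and $(\cdot)_{tt}$ along $V_d$, is built from the first two derivatives of the potential and from $V_d$ and its derivatives. By balanced homogeneity, both the critical condition and the sign of $H_{t\bar t}-|H_{tt}|$ are invariant under $z\mapsto\lambda z$. So it suffices to work on the unit sphere $\partial\mathbb B$, which lies inside a compact annulus $\{c\le S\le C\}$. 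On $\partial\mathbb B$, Lemma~\ref{lem:C2} gives
\[
(F_{d,\varepsilon,\eta})_t\to(F_{d,\varepsilon})_t,\qquad
(F_{d,\varepsilon,\eta})_{t\bar t}-|(F_{d,\varepsilon,\eta})_{tt}|\to(F_{d,\varepsilon})_{t\bar t}-|(F_{d,\varepsilon})_{tt}|,
\]
uniformly as $\eta\downarrow0$.

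First I establish a \emph{uniform} gap for $F_{d,\varepsilon}$ on its critical set. Let $\Sigma=\{q\in\partial\mathbb B:(F_{d,\varepsilon})_t(q)=0\}$. This set is compact, because $(F_{d,\varepsilon})_t$ is continuous: $F_{d,\varepsilon}$ is $C^2$ since $d+1\ge 3$. Proposition~\ref{prop:eps} gives $(F_{d,\varepsilon})_{t\bar t}-|(F_{d,\varepsilon})_{tt}|>0$ pointwise on $\Sigma$. This also covers the axis points, where Lemma~\ref{lem:axis} applies: there the $\varepsilon G$ term contributes at least $\varepsilon s/2$. The function is continuous, so it has a positive minimum $\gamma$ on $\Sigma$. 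Continuity then gives an open neighbourhood $\mathcal N$ of $\Sigma$ in $\partial\mathbb B$ on which the gap exceeds $\gamma/2$. On the compact set $\partial\mathbb B\setminus\mathcal N$, the function $|(F_{d,\varepsilon})_t|$ has a positive minimum $c$. If $\Sigma=\emptyset$, take $\mathcal N=\emptyset$.

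Now choose $\eta$ so small that the two uniform convergences are within $c/2$ and $\gamma/4$ respectively. Then $|(F_{d,\varepsilon,\eta})_t|>c/2$ on $\partial\mathbb B\setminus\mathcal N$, so there are no critical points there. On $\mathcal N$, the gap $(F_{d,\varepsilon,\eta})_{t\bar t}-|(F_{d,\varepsilon,\eta})_{tt}|$ exceeds $\gamma/4$. Hence every leafwise critical point of $F_{d,\varepsilon,\eta}$ satisfies the strict inequality, and rescaling extends this to all of $\mathbb C^3\setminus\{0\}$.

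The main point to verify carefully is the transfer from $C^2$ convergence of potentials to uniform convergence of the leafwise quantities. The expressions $H_{tt}=H_{zz}[V,V]+H_z[DV\cdot V]$ and $H_{t\bar t}=H_{z\bar z}[V,\bar V]$ are bilinear in the derivatives of $H$, with coefficients given by the bounded smooth field $V_d$ on $\partial\mathbb B$. The delicate input is the $d=2$ endpoint ($s=3/2$), where $F_{d,\varepsilon}$ is only $C^2$ and not smooth across the coordinate hyperplanes. This is exactly what Lemma~\ref{lem:C2} handles, so continuity of the gap function on $\Sigma$ and the uniform estimates remain valid there. I expect no further obstacle.
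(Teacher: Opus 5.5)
Your proposal is correct and follows the paper's proof essentially step for step. You normalize to $S=1$ and take the positive minimum $\gamma$ of the gap on the compact critical set of $F_{d,\varepsilon}$. You then split the sphere into a neighbourhood where the gap exceeds $\gamma/2$ and a complement where $|(F_{d,\varepsilon})_t|\ge c$, and use the $C^2$ convergence of Lemma~\ref{lem:C2} to rule out new critical points and keep the gap above $\gamma/4$. Your extra remarks fill in details the paper leaves implicit: the empty critical set, homogeneity of the gap, and how $C^2$ convergence of the potential gives uniform convergence of the leafwise derivatives.
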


\begin{proof}
Again normalize by $S = 1$. Put $G = F_{d,\varepsilon}$. Its projective critical set
\[
\Sigma_G = \{z\in \partial \mathbb B:G_t(z) = 0\}
\]
is compact. Proposition~\ref{prop:eps} implies that the continuous gap function
$\Gamma_G = G_{t\bar t}-|G_{tt}|$ is strictly positive on $\Sigma_G$. Hence
\[
\gamma:=\min_{\Sigma_G}\Gamma_G > 0.
\]
Choose an open neighbourhood $U$ of $\Sigma_G$ on which $\Gamma_G > \gamma/2$. On the compact
complement $\partial \mathbb B\setminus U$, the continuous function $|G_t|$ has a positive minimum,
say $c > 0$.

\noindent By Lemma~\ref{lem:C2}, $F_{d,\varepsilon,\eta}\to G$ in $C^2(\partial\mathbb B)$. For
sufficiently small $\eta$, we therefore have $|(F_{d,\varepsilon,\eta})_t| > c/2$ on
$\partial \mathbb B\setminus U$, so no new critical point can appear there, while on $U$ the
corresponding Hessian gap
is larger than $\gamma/4$. Thus every critical point of the smoothed potential lies in $U$ and has
the required strict Hessian sign.
\end{proof}

\section{Levi positivity and the property \texorpdfstring{$P_S(H)$}{PS(H)}}

\subsection{The Levi form}

Set $H:=F_{d,\varepsilon,\eta}$, $s = \frac{d+1}{2}$ and
\[
Q_j = |z_j|^2+\eta S.
\]
The Levi calculation is independent of the critical set and of the Jouanolou field. It is a
pointwise statement about the regularized homogeneous potential $H$.

We first record the elementary calculation for one positive-definite Hermitian quadratic form. To
avoid any convention about which variable of the Hermitian product is complex linear, write
\[
Q(z) = z^*Az,
\]
where $A$ is a positive-definite Hermitian matrix. Then $\partial Q_z(\xi) = z^*A\xi$, and the Levi
form
\[
\mathcal L_Q(z;\xi) = \xi^*A\xi.
\]
Therefore
\begin{equation}\label{eq:levi-logQ}
\mathcal L_{\log Q}(z;\xi)
 = \frac{(z^*Az)(\xi^*A\xi)-|z^*A\xi|^2}{(z^*Az)^2}.
\end{equation}
By Cauchy--Schwarz for the Hermitian form defined by $A$,
\begin{equation}\label{eq:levi-logQ-positive}
\mathcal L_{\log Q}(z;\xi) \geq 0,
\end{equation}
with equality exactly when $\xi\in\mathbb Cz$. For our forms,
\[
Q_j(z) = |z_j|^2+\eta S(z)
\]
has diagonal matrix with one entry $1+\eta$ and the other two entries $\eta$, and is therefore
positive definite for every $\eta > 0$. The Euclidean form $S$ is also positive definite. Thus every
$\log Q_j$ and $\log S$ has non-negative Levi form with radial kernel exactly $\mathbb Cz$.

\medskip
\noindent We now pass from the individual quadratic forms to their sum of powers. Write
\[
H = \sum_{j = 1}^3e^{u_j}+e^{u_4},
\]
where $u_j = s\log Q_j$, for $1 \leq j \leq 3$ and $u_4 = \log\varepsilon+s\log S$. Define
\[
w_k:=\frac{e^{u_k}}{H},
\]
and note that $w_k > 0$ and $\sum_kw_k = 1$. Since $\partial H = \sum_ke^{u_k}\partial u_k$, we have
\begin{equation}\label{eq:dlogH}
\partial\log H = \sum_kw_k\partial u_k.
\end{equation}
The derivative of a weight $w_k$ is
\[
\partial w_k = \frac{e^{u_k}}{H} \partial u_k - \frac{e^{u_k}}{H^2} \partial H
 = w_k\left(\partial u_k-\sum_jw_j\partial u_j\right).
\]
Applying $\bar\partial$ to \eqref{eq:dlogH}, evaluating on $(\xi,\bar\xi)$, and collecting the terms
gives the log-sum-exp identity
\begin{equation}\label{eq:levi-logsum}
\mathcal L_{\log H}(\xi)
 = \sum_kw_k\mathcal L_{u_k}(\xi)
+\sum_kw_k|\partial u_k(\xi)|^2
-\left|\sum_kw_k\partial u_k(\xi)\right|^2.
\end{equation}
For completeness, let $c_k = \partial u_k(\xi)$. The last two terms can be written as the weighted
variance
\begin{equation}\label{eq:weighted-variance}
\sum_kw_k|c_k|^2-
\left|\sum_kw_kc_k\right|^2
 = \frac12\sum_{j,k}w_jw_k|c_j-c_k|^2 \geq 0.
\end{equation}
Furthermore, $\mathcal L_{u_j} = s\mathcal L_{\log Q_j}$, for $1 \leq j \leq 3$ and
$\mathcal L_{u_4} = s\mathcal L_{\log S}$. If $\xi\notin\mathbb Cz$, each term in the first sum of
\eqref{eq:levi-logsum} is strictly positive by \eqref{eq:levi-logQ-positive}, and all $w_k$ are
positive. Hence, for all $\xi\notin\mathbb Cz$
\begin{equation}\label{eq:strict-projective-Levi}
\mathcal L_{\log H}(z;\xi) > 0.
\end{equation}

\noindent It remains to check the radial direction. If $\xi = \lambda z$, homogeneity of every
quadratic form gives
\[
\mathcal L_{\log Q_j}(z;\lambda z) = 0,
\quad \text{and} \ \
\mathcal L_{\log S}(z;\lambda z) = 0.
\]
Moreover, $\partial\log Q_j(z)(\lambda z) = \lambda \frac{Q_j(z)}{Q_j(z)} = \lambda$, and
$\partial\log S(z)(\lambda z) = \lambda \frac{S(z)}{S(z)} = \lambda$, so
\[
\partial u_k(\lambda z) = s\lambda
\]
for every $k$. The variance \eqref{eq:weighted-variance} therefore also vanishes. Consequently
\begin{equation}\label{eq:Levi-kernel-radial}
\ker\mathcal L_{\log H}(z;\cdot) = \mathbb Cz.
\end{equation}
This radial kernel is forced by balanced homogeneity as
\[
\log H(\lambda z) = \log H(z)+(d+1)\log|\lambda|,
\]
and $\log|\lambda|$ is pluriharmonic on $\mathbb C^*$.

\subsection{The induced normal metric}\label{subsec:normal-metric-construction}

Put $N = H^{1/p}$. Balanced homogeneity gives
\[
N(\lambda z) = |\lambda|N(z).
\]
Thus $N^2$ defines a smooth Hermitian metric on the tautological line bundle
$\mathcal O(-1)\to\mathbb P^2$. A degree-$d$ foliation of $\mathbb P^2$ has normal bundle
\[
N\mathcal F\simeq\mathcal O(d+2),
\]
so duality and tensor powers induce a Hermitian normal metric whose local frame has norm
proportional to $N^{-(d+2)}$. For any homogeneous vector field $V$ of degree $d$ and zero divergence,
including $V = J_d$, it is useful to see this directly. Let
$R = \sum z_j\frac{\partial}{\partial z_j}$,
$\Omega = dz_1\wedge dz_2\wedge dz_3$, and define the normal $1$-form
\[
\vartheta_z(u):=\det(R(z),V(z),u) = \bigl(\iota_V\iota_R\Omega\bigr)_z(u).
\]
It annihilates the radial and leaf directions and satisfies
\[
\vartheta_{\lambda z}(\lambda u) = \lambda^{d+2}\vartheta_z(u),
\]
which is the transition law for $\mathcal O(d+2)$. Since $\operatorname{div}V = 0$ and
$[V,R] = (1-d)V$,
the Lie derivative satisfies
\[
\mathrm L_V\vartheta = 0.
\]
To see this, we use the identity $\mathrm L_{X}(\iota_{Y}) - \iota_Y(\mathrm L_X) = \iota_{[X,Y]}$,
for vector fields $X,Y$. Therefore,
\begin{align*}
\mathrm L_V\vartheta =& \iota_{[V,V]}(\iota_R \Omega) + \iota_V(\mathrm L_V(\iota_R \Omega))\\
=& \iota_V\iota_{[V,R]}(\Omega) + \iota_V\iota_R (\mathrm L_V \Omega)\\
=& \iota_V\iota_{(1-d)V}(\Omega) + \iota_V\iota_R (\operatorname{div}V \Omega)\\
=& 0.
\end{align*}

\noindent Hence this trivialization is Bott-parallel along the lifted leaves; compare
\cite[Lemma~6.2]{AD25}. The real field $W$ is a leafwise time change of $V$. Differentiating the
time-change factor contributes only a vector tangent to the foliation, which disappears in the
normal quotient. Thus the map induced by $D\Phi_W^t$ on the normal quotient is Bott parallel
transport. If $\xi(t)$ is a non-zero Bott-parallel normal vector, its coefficient in the
$\vartheta$-trivialization is constant, while the induced frame norm is $N(z(t))^{-(d+2)}$.
Therefore
\begin{equation}\label{eq:inf-distance-core}
\log\|\xi(t)\|_\perp
 = C-(d+2)\log N(z(t))
 = C+\frac{d+2}{p}f(z(t)),
\qquad f = -\log H.
\end{equation}
At a regular critical point $H_t = 0$,
\[
f_{t\bar t} = -\frac{H_{t\bar t}}H,
\quad \text{and} \ \
f_{tt} = -\frac{H_{tt}}H.
\]
Thus $H_{t\bar t} > |H_{tt}|$ is equivalent to negative definiteness of the real Hessian of $f$, and
by \eqref{eq:inf-distance-core} to a strict local maximum of the infinitesimal transverse distance.

\begin{thm}\label{thm:gauge}
Fix $d \geq 2$ and put $p = d+1$. There exist $\varepsilon,\eta > 0$ such that
\begin{equation}\label{eq:Hepseta}
H_{\varepsilon,\eta}(z)
 = \sum_{j = 1}^3\bigl(|z_j|^2+\eta S\bigr)^{\frac{d+1}{2}}
+\varepsilon S^{\frac{d+1}{2}}
\end{equation}
is positive, $C^\infty$ on $\mathbb C^3\setminus\{0\}$, and balanced homogeneous of degree $d+1$. It
has the following properties:
\begin{enumerate}
\item[(i)] every zero of the leafwise derivative $H_t$ along $J_d$ satisfies
\[
H_{t\bar t} > |H_{tt}|;
\]
\item[(ii)] the Levi form of $\log H$ is non-negative and its kernel at $z\neq0$ is exactly the
radial line $\mathbb Cz$;
\item[(iii)] $N = H^{\frac{1}{d+1}}$ induces a smooth Hermitian metric on
$N\mathcal J_d\simeq\mathcal O(d+2)$ for which every regular critical point of the infinitesimal
transverse distance is a strict
local maximum.
\end{enumerate}
\end{thm}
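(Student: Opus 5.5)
The theorem assembles results already proved, so the plan is to collect them in the right order and then check the basic properties of $H_{\varepsilon,\eta}$ directly. First fix $\varepsilon>0$ as in Proposition~\ref{prop:eps}. With this $\varepsilon$ fixed, choose $\eta>0$ small enough for Proposition~\ref{prop:eta}. The basic properties are elementary. Each $Q_j=|z_j|^2+\eta S$ is a positive-definite Hermitian quadratic form, so $Q_j\ge\eta S>0$ on $\mathbb C^3\setminus\{0\}$. Hence $Q_j^{(d+1)/2}$ and $S^{(d+1)/2}$ are $C^\infty$ there, since a positive real power of a smooth positive function is smooth. This gives positivity and smoothness. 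Since $Q_j(\lambda z)=|\lambda|^2Q_j(z)$ and $S(\lambda z)=|\lambda|^2S(z)$, every summand scales by $|\lambda|^{d+1}$, which is balanced homogeneity of degree $p=d+1$.

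For (i), I would note that $H_{\varepsilon,\eta}$ is literally $F_{d,\varepsilon,\eta}$ from \eqref{eq:smoothing}. Proposition~\ref{prop:eta} then states (i) directly for the chosen $\varepsilon,\eta$. One point to make explicit: by homogeneity, zeros of $H_t$ form a cone, and the ratio condition \eqref{eq:PBineq} is scale invariant. Indeed $H_t$, $H_{t\bar t}$ and $H_{tt}$ scale by real powers of $|\lambda|$ times phases, because $\rho$ and the chain rule are compatible with the weights. So the normalization to $S=1$ used there loses nothing.

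For (ii), I would cite the Levi computation of Section~6.1. Formula \eqref{eq:levi-logsum} together with the variance identity \eqref{eq:weighted-variance} gives non-negativity. Formula \eqref{eq:strict-projective-Levi} gives strict positivity off $\mathbb Cz$, and \eqref{eq:Levi-kernel-radial} gives that the kernel is exactly $\mathbb Cz$. These hold for every $\varepsilon,\eta>0$, so they impose no further constraint on the parameters.

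For (iii), I would use Section~\ref{subsec:normal-metric-construction}. $N=H^{1/(d+1)}$ is smooth and positive and satisfies $N(\lambda z)=|\lambda|N(z)$, so $N^2$ is a smooth Hermitian metric on $\mathcal O(-1)$. Dualizing and taking the $(d+2)$-th tensor power gives a smooth metric on $\mathcal O(d+2)\simeq N\mathcal J_d$, with the frame $\vartheta$ having norm proportional to $N^{-(d+2)}$. Since $\operatorname{div}J_d=0$, the computation $\mathrm L_V\vartheta=0$ shows that $\vartheta$ is Bott-parallel. Then \eqref{eq:inf-distance-core} identifies the logarithm of the transverse distance with $C+\frac{d+2}{d+1}f$ along the leaf. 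Its regular critical points are therefore the zeros of $H_t$. There the real Hessian of $f$ equals $-H^{-1}$ times that of $H$, and this is negative definite by (i), which gives a strict local maximum.

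The only genuinely delicate input is (i), specifically the order of choosing the parameters. $\varepsilon$ must be fixed first because the $C^2$ convergence in Lemma~\ref{lem:C2} is for fixed $\varepsilon$. This step is already handled by Propositions~\ref{prop:eps} and \ref{prop:eta}, so the proof should reduce to careful bookkeeping.
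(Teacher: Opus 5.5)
Your proposal is correct and follows the paper's proof: fix $\varepsilon$ by Proposition~\ref{prop:eps}, then $\eta$ by Proposition~\ref{prop:eta} to obtain (i); cite the Levi computation \eqref{eq:levi-logsum}--\eqref{eq:Levi-kernel-radial} for (ii); and use the Bott-parallel frame $\vartheta$ together with \eqref{eq:inf-distance-core} for (iii). Your additional checks are valid details that the paper leaves implicit: positivity and smoothness via $Q_j\ge\eta S>0$, and the scale invariance of $H_{t\bar t}>|H_{tt}|$ (both sides scale by $|\lambda|^{3d-1}$), which justifies normalizing to $S=1$.
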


\begin{proof}
Choose $\varepsilon > 0$ as in Proposition~\ref{prop:eps}, and then choose $\eta > 0$ sufficiently
small
as in Proposition~\ref{prop:eta}. This gives (i), together with positivity, smoothness and balanced
homogeneity. Equations \eqref{eq:levi-logsum}--\eqref{eq:Levi-kernel-radial} give (ii). The
construction and identity \eqref{eq:inf-distance-core} give (iii).
\end{proof}

\begin{rem}[Role of the two regularization parameters]\label{rem:eps-eta-roles}
The smallness requirements on $\varepsilon$ and $\eta$ enter only in verifying the $P_B$ property. The Levi
calculation itself needs only $\varepsilon > 0$ and $\eta > 0$: for every such pair, the forms
$Q_j = |z_j|^2+\eta S$ are positive definite and we get strict Levi positivity on projective
directions. Thus $\varepsilon$ first repairs the axis Hessians, $\eta$ then smooths the power
potential without losing $P_B$, while projective Levi positivity is automatic for the resulting
positive regularized family.
\end{rem}

\subsection{The property \texorpdfstring{$P_S(H)$}{PS(H)}}

For the adapted potential, the projective singularities can be computed explicitly. In the affine
chart $x = 1$, with coordinates $(y,z)$, a holomorphic defining field is
\begin{equation}\label{eq:localfield}
X = (z^d-y^{d+1})\frac{\partial}{\partial y}
+(1-zy^d)\frac{\partial}{\partial z}.
\end{equation}
Its singular points are
\begin{equation}\label{eq:singularities}
[1:a:a^{-d}],
\quad \text{where} \ \
a^{d^2+d+1} = 1.
\end{equation}
At $[1:1:1]$, the derivative of \eqref{eq:localfield} is
\[
\begin{pmatrix}
-(d+1)&d\\
-d&-1
\end{pmatrix},
\]
whose eigenvalues are
\begin{equation}\label{eq:eigenvalues}
\mu_\pm = -\frac{d+2}{2}\pm i\frac{\sqrt3\,d}{2}.
\end{equation}
Their quotient is non-real. Since $J_d(1,1,1) = (1,1,1)$, formula \eqref{eq:sing-rho} gives a
negative
real coefficient for the associated real field, and hence $[1:1:1]$ is a source. For
$a^{d^2+d+1} = 1$, put $D_a = \operatorname{diag}(1,a,a^{-d})$. Then
\[
J_d(D_a z) = a^dD_aJ_d(z),\qquad H(D_a z) = H(z).
\]
The second identity follows directly from the modulus-only formula \eqref{eq:Hepseta}. Since
$|a| = 1$, the first identity and \eqref{eq:rho} imply $\widetilde W(D_a z) = D_a\widetilde W(z)$.
Thus
these symmetries preserve the real field as well as the foliation, and carry the source $[1:1:1]$ to
every singularity in \eqref{eq:singularities}.

\noindent The preceding calculation proves the following proposition.

\begin{prop}\label{prop:PS}
For every $d \geq 2$, the Jouanolou field $J_d$ equipped with the potential of
Theorem~\ref{thm:gauge}
satisfies $P_S(H)$.
\end{prop}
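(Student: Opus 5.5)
To prove Proposition~\ref{prop:PS} I will check the two requirements of Definition~\ref{def:PBPS}: every projective singularity of $\mathcal J_d$ is hyperbolic, and every projective singularity is a source for the real field $W$ attached to $H = H_{\varepsilon,\eta}$. Non-vanishing of $J_d$ on $\mathbb C^3\setminus\{0\}$ is immediate, since $y^d = z^d = x^d = 0$ forces $x = y = z = 0$.

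\textbf{Step 1: the singular set.} In the chart $x = 1$ the field \eqref{eq:localfield} vanishes exactly when $z^d = y^{d+1}$ and $zy^d = 1$. Substituting $z = y^{-d}$ into the first equation gives $y^{d^2+d+1} = 1$. This yields the $d^2+d+1$ points \eqref{eq:singularities}, which is the expected count $d^2+d+1$ for a degree-$d$ foliation. Hence there are no singularities on the line $x = 0$; one can also check directly that $J_d(z)=\lambda z$ with $x=0$ forces $z=0$.

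\textbf{Step 2: reduction to $[1:1:1]$.} The diagonal maps $D_a$ commute with the construction. Since $H$ depends only on the moduli $|z_j|$, we have $H\circ D_a = H$. Equivariance gives $J_d\circ D_a = a^d D_a J_d$. Together with Lemma~\ref{lem:gradient} (using $\rho(D_a z)$ and $|a| = 1$), this shows that $D_a$ conjugates $\widetilde W$ to itself. Since $D_a$ also fixes the foliation, it preserves the hyperbolicity type and the source character, and it sends $[1:1:1]$ to $[1:a:a^{-d}]$. So it suffices to treat $[1:1:1]$.

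\textbf{Step 3: hyperbolicity at $[1:1:1]$.} The linear part of \eqref{eq:localfield} has trace $-(d+2)$ and determinant $d^2+d+1$. This gives the eigenvalues \eqref{eq:eigenvalues}, whose ratio $\mu_+/\mu_-$ is a non-real unimodular number because $d>0$.

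\textbf{Step 4: the source property.} This is the main point. By \eqref{eq:sing-rho} with $\lambda = 1$, we get $\rho = -(d+1)N^{-2d+2} < 0$ at the singular direction. Near the singularity, the projected real field equals $\mathfrak r(c(w)X)$ up to a positive factor. Here $c(w)$ is a smooth coefficient with $c(0)$ a negative real number; this uses the compatibility between the affine chart field $X$ and $J_d$, namely $X = J_d - (\text{first component})\cdot R$ restricted to $x=1$, which introduces only a positive real normalization. Because $X(0) = 0$, the linearization of the real field is $c(0)$ times the real realification of $DX(0)$. Its eigenvalues are therefore $c(0)\mu_\pm$ together with their conjugates, and these have real part $-c(0)(d+2)/2 > 0$. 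So the zero is a hyperbolic source of $W$.

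The step I expect to need most care is this bookkeeping of the coefficient's sign when passing from $\widetilde W$ on $\mathbb C^3$ to the chart field $X$. Following Remark~\ref{rem:gauge-dependence}, I would make this sign independent of the potential: compare with the Euclidean potential, for which the same computation holds, and note that changing the potential only multiplies the coefficient at the singular point by a positive real factor.
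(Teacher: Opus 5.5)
Your proposal is correct and follows essentially the same route as the paper. Both compute the linearization of the chart field $X$ at $[1:1:1]$, use the negative real coefficient $\rho=-(d+1)N^{-2d+2}$ from \eqref{eq:sing-rho} to get a source, and transport these facts to the other singularities with the symmetries $D_a$. Your extra bookkeeping fills in details the paper leaves implicit without changing the argument: the projected real field is exactly $\mathfrak r(\rho(1,w)X(w))$ because $d\pi(J_d)=X$ on $x=1$, its linearization is $c(0)DX(0)$ since $X(0)=0$, and no singularity lies on the line $x=0$.
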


\noindent This is consistent with the direct higher-degree computation in \cite[Section~13.1]{AD25}.

\section{The Alvarez--Deroin mechanism for a homogeneous potential}

Alvarez and Deroin \cite[Proposition~2.2 and Theorem~10.1]{AD25} prove that a degree-$d$ algebraic
foliation of $\mathbb P^2$ satisfying their properties $P_B$ and $P_S$ is structurally stable. Their
proof is written for the real field and auxiliary metrics associated with the standard Hermitian
norm. For the potential constructed above, the critical surface, the real field, the leafwise metric
and the induced normal metric are different. We therefore verify the metric-dependent parts of their
argument. Strict projective Levi positivity is needed in addition to the sink and source properties.
Once the sink-basin construction, longitudinal contraction and transverse expansion have been
recovered, the remaining arguments are those of Alvarez--Deroin.

\begin{thm}\label{thm:gauge-criterion}
Let $V$ be a homogeneous vector field of degree $d \geq 1$ on $\mathbb{C}^3$, non-zero away from the
origin, and of zero divergence. Let $\mathcal{F}$ be the induced foliation of $\mathbb{P}^2$.
Suppose that the critical set $B = \pi(\{H_t = 0\})$ is non-empty and that a smooth positive
balanced
homogeneous potential $H$ satisfies:
\begin{enumerate}
\item[(i)] $\log H$ has positive Levi form on every non-radial direction;
\item[(ii)] $(V,H)$ satisfies $P_B(H)$;
\item[(iii)] $(V,H)$ satisfies $P_S(H)$.
\end{enumerate}
Then $\mathcal{F}$ is structurally stable.
\end{thm}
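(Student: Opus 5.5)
The proof will follow the Alvarez--Deroin argument \cite[Proposition~2.2 and Theorem~10.1]{AD25}, replacing the Euclidean norm throughout by $N = H^{1/p}$. Only the steps where their argument uses specific properties of the standard norm need to be redone; these are the sink-basin construction, longitudinal contraction and transverse expansion. After that, the invariant laminations, affine leaf geometry, orbital stability and band extension are metric-free and can be quoted.

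\textbf{Step 1: the attracting surface.} By $P_B(H)$ and the criterion \eqref{eq:PBineq}, every point of $B=\pi(\{H_t=0\})$ is a non-degenerate leafwise sink of $W=\nabla_g f$. Non-degeneracy makes $\{H_t=0\}$ transversally cut out, so $B$ is a smooth real surface transverse to $\mathcal F$. Since $H_t=0$ is a holomorphic condition in the transverse parameter, $B$ is in fact a holomorphic transversal, hence a compact Riemann surface. Linearizing $W$ along $B$ gives a uniform tubular sink basin $\mathcal V_B$. The singularities are hyperbolic sources by $P_S(H)$, so they have small source neighbourhoods $\mathcal V_S$; by Remark~\ref{rem:gauge-dependence}, the source property does not depend on the potential.

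\textbf{Step 2: longitudinal contraction.} Lemma~\ref{lem:metric-sign} and hypothesis (i) give completeness of $g$ and $\operatorname{div}_g W=-\Delta_g\log H<0$. By compactness of $\mathbb P^2\setminus(\mathcal V_B\cup\mathcal V_S)$ modulo the homogeneous scaling, this bound is uniform there. As in \cite[Lemma~6.1]{AD25}, the flow of $W$ then contracts leafwise area at a uniform exponential rate. A Poincar\'e--Bendixson-type argument on the complete leaves shows that every forward orbit outside the separatrices of the sources enters $\mathcal V_B$ in bounded time.

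\textbf{Step 3: transverse expansion, the main obstacle.} We need that backward $W$-flow contracts, or forward flow expands, the normal metric away from $B$. By \eqref{eq:inf-distance-core}, a Bott-parallel normal vector satisfies $\log\|\xi\|_\perp = C+\frac{d+2}{p}f$, and $f$ increases along $W$ at rate $|\nabla_g f|_g^2$. This gives monotone growth of the transverse norm, strict away from $B\cup\mathrm{Sing}$, using only that $W$ is the $g$-gradient of $f$ and that $\vartheta$ is Bott-parallel (which uses $\operatorname{div}V=0$). The difficulty is making the expansion \emph{uniform}. I would first bound $|\nabla_g f|_g$ from below on the compact region outside $\mathcal V_B\cup\mathcal V_S$, then control the transit time through the source neighbourhoods using the explicit linearization from $P_S$. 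The remaining worry is that $B$ is only the zero set of $W$ and not a strict maximum of the transverse distance globally. For this I would use part (iii) of Theorem~\ref{thm:gauge}, in its general form: nondegenerate maxima at $B$ give a Lyapunov function on $\mathcal V_B$.

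\textbf{Step 4: conclusion.} All properties above are open in $\mathcal F$ and $H$ fixed: $P_B$ and $P_S$ are strict inequalities at nondegenerate zeros, and the divergence and expansion estimates are strict on compact sets. So they persist for every nearby foliation $\mathcal F'$ with the same potential $H$. The divergence-zero assumption is needed only for the Bott trivialization and is replaced by the general normal metric induced from $N$. The Alvarez--Deroin construction then produces the invariant laminations and the conjugating homeomorphism, which proves structural stability.
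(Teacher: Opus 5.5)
Your outline follows the paper's reduction to Alvarez--Deroin: a transverse sink surface from $P_B(H)$, longitudinal contraction from $\operatorname{div}_gW=-\Delta_g\log H<0$, transverse expansion from the Bott-parallel form $\vartheta$ and the growth of $f$, source models from $P_S(H)$, openness, and then \cite[Sections~8--10]{AD25}. However, two steps fail as written.

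\textbf{Step 2.} The claim that every forward orbit off the separatrices enters $\mathcal V_B$ in bounded time is false. Bounded time already fails near a source: it is a fixed point outside $\mathcal V_B$, so points close to it need arbitrarily long to leave $\mathcal V_S$. More fundamentally, orbits in the Julia set $\mathbb P^2\setminus\operatorname{Att}_W(B)$ never enter $U_B$ at all. The compact invariant set $K=\mathbb P^2\setminus(\operatorname{Att}_W(B)\cup\operatorname{Rep}_W(S))$, made of such orbits, is precisely where the invariant splitting and the foliations $\mathcal F_W^{\pm},\mathcal F_W^{\pm\pm}$ of \cite[Section~8]{AD25} are needed. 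A Poincar\'e--Bendixson argument gives nothing on non-compact leaves, where orbits can escape to the ends.

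The proof does not need that claim. It needs only the uniform estimate $at\le f(\widetilde\Phi_t(z))-f(z)\le At$ along orbit segments that stay outside $U_B$. This holds because $q=df(\widetilde W)=|\widetilde W|_g^2=4N^{-2d+2}|H_t|^2/H^2$ is scalar invariant and has a positive minimum on the compact set $\mathbb P^2\setminus\operatorname{Int}U_B$. The minimum is positive even at the singular points, where $H_t=\frac p2\lambda H\neq0$ by \eqref{eq:sing-rho}. So Step~3 needs neither a separate transit-time analysis through the source neighbourhoods nor a global Lyapunov argument.

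\textbf{Step 4.} Your treatment of nearby foliations is not right. The objects $H_t$, $B$ and $W$ depend on the homogeneous representative, not only on the foliation. By \eqref{eq:Euler}, replacing $\widehat V$ by $\widehat V+PR$ changes $H_t$ by $\frac p2PH$. So ``open in $\mathcal F$ with $H$ fixed'' only makes sense after representatives have been chosen continuously.

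Nor can the normal metric induced by $N$ replace the zero-divergence hypothesis. That metric is already the one in \eqref{eq:inf-distance-core}; what $\operatorname{div}V=0$ provides is $\mathrm L_V\vartheta=0$. For a general representative one gets $\mathrm L_{\widehat V}\vartheta=(\operatorname{div}\widehat V)\vartheta$, which destroys \eqref{eq:normalgrowth}.

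The paper settles both points with the linear slice $\widehat V^{\mathrm{df}}=\widehat V-\frac{\operatorname{div}\widehat V}{d+2}R$. Since $\operatorname{div}(PR)=(d+2)P$, this representative is divergence-free, defines the same foliation, and is close to $V$. Hence (i)--(iii) and the Bott identity hold verbatim for every nearby foliation, and $B$ persists by the implicit function theorem.

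\textbf{Step 1 (minor).} $H_t$ involves $\bar z$, so $\{H_t=0\}$ is not a holomorphic condition and $B$ is not a complex curve. Indeed, no compact complex curve of $\mathbb P^2$ is everywhere transverse to a foliation of degree $d\ge1$. $B$ is a real surface transverse to $\mathcal F$, and its Riemann-surface structure is the one induced from the transverse holomorphic structure of $\mathcal F$.
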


The dependence on the three hypotheses is the following. Condition~(ii) supplies the sink-basin
construction of \cite[Theorem~5.1]{AD25} and the growth estimate outside the sink neighbourhood;
condition~(i) gives the longitudinal contraction used in \cite[Lemma~6.1]{AD25}; the induced normal
metric, together with the growth estimate, gives the transverse expansion of
\cite[Lemma~6.2 and Corollary~6.3]{AD25}; and condition~(iii) gives the source product models of
\cite[Lemma~4.11]{AD25}. These are precisely the inputs used from \cite[Section~8]{AD25} onward.

\begin{proof}
From this point on, we use the notation of \cite[Section~3]{AD25} as much as possible. Put
\[
S = \operatorname{Sing}(\mathcal F),\qquad
B = \pi(\{H_t = 0\}),
\quad \text{and} \quad
\mathbb P^2_* = \mathbb P^2\setminus(B\cup S).
\]
By \eqref{eq:sing-rho}, $B$ does not meet the singular set. At a point of $B$, the condition
$H_{t\bar t} > |H_{tt}|$ says that the real differential of $H_t$ in the leaf direction is
invertible.
Hence $B$ is a smooth real surface transverse to the complex foliation. For $d \geq 2$, its
non-emptiness follows from $T\mathcal F\simeq\mathcal O(1-d)$ exactly as in \cite[Corollary~4.4]{AD25};
in Theorem~\ref{thm:gauge-criterion} it is included as a hypothesis so that the statement also
covers $d = 1$.

The property $P_B(H)$ gives a small closed plaque neighbourhood $U_B$ of $B$ whose boundary is
inward pointing for $W$. Along the lifted flow, with $f = -\log H$, we have
\begin{equation}\label{eq:qgrowth}
q = df(\widetilde W) = |\widetilde W|_g^2
 = 4N^{-2d+2}\frac{|H_t|^2}{H^2}.
\end{equation}
The function $q$ is scalar invariant, vanishes exactly over $B$, and is strictly positive at the
projective singularities. Since $\mathbb P^2\setminus\operatorname{Int}(U_B)$ is compact, there are
constants $0 < a \leq A$ such that
\begin{equation}\label{eq:fgrowth}
at \leq f(\widetilde\Phi_t(z))-f(z) \leq At
\end{equation}
for every positive orbit segment which stays outside $U_B$. The lifted real flow is complete in both
time directions, since \eqref{eq:rho} makes $\widetilde W$ a real homogeneous field of degree one
and therefore $|\widetilde W(z)| \leq C|z|$.

The proof of \cite[Theorem~5.1]{AD25} now applies with $f = -\log H$ in place of the Euclidean
logarithmic potential. In particular,
\begin{equation}\label{eq:disc-basin}
D = \operatorname{Sat}_{\mathcal F}(B) = \operatorname{Att}_W(B)
\end{equation}
is a smooth locally trivial open-disc bundle over $B$; every leaf in $D$ meets $B$ at exactly one
point, and the projection
\[
P:D\longrightarrow B
\]
is holomorphic for the transverse complex structure. Here
\[
\operatorname{Att}_W(B) = \{x:\ d(\Phi_W^t(x),B)\longrightarrow0\text{ as }t\to+\infty\}.
\]
Similarly, following \cite{AD25}, we write
\[
\operatorname{Rep}_W(S)
 = \{x:\ d(\Phi_W^t(x),S)\longrightarrow0\text{ as }t\to-\infty\}
 = \operatorname{Att}_{-W}(S).
\]

\noindent We next check the two estimates used in \cite[Sections~6--8]{AD25}. On $\mathbb P^2_*$, let
\[
E_\ell = T_{\mathbb R}\mathcal F/\mathbb RW.
\]
If $\omega_g$ is the leafwise area form and $\|[v]\|_\ell = |\omega_g(v,W)|$, then the logarithmic
derivative of this norm is $\operatorname{div}_gW$, as in \cite[Lemma~6.1]{AD25}. By
\eqref{eq:divergence},
\[
\operatorname{div}_gW = -\Delta_g\log H < 0.
\]
Thus the quotient $T_{\mathbb R}\mathcal F/\mathbb RW$ is uniformly exponentially contracted on
compact sets away from $B$ and $S$.

For the direction transverse to the foliation, the normal $1$-form and the Hermitian normal metric
were constructed in Subsection~\ref{subsec:normal-metric-construction}. The computation there is
independent of the particular potential, and the induced map of $D\Phi_W^t$ on
$T\mathbb P^2/T\mathcal F$ is Bott parallel transport; compare \cite[Lemma~6.2 and
Corollary~6.3]{AD25}. Hence
\eqref{eq:inf-distance-core} gives, along a lifted orbit,
\begin{equation}\label{eq:normalgrowth}
\frac{\|v(t)\|_\perp}{\|v(0)\|_\perp}
 = \left(\frac{N(z(0))}{N(z(t))}\right)^{d+2}
 = \exp\left(\frac{d+2}{p}[f(z(t))-f(z(0))]\right).
\end{equation}
Together with \eqref{eq:fgrowth}, this gives uniform transverse expansion outside the sink
neighbourhood.

By $P_S(H)$, every singularity is a hyperbolic source for $W$. The source neighbourhoods and their
boundary foliations are therefore those of \cite[Lemma~4.11]{AD25}. Combining these source and sink
neighbourhoods with the preceding two estimates gives the complete metric and the uniform estimates
of \cite[Lemma~8.1]{AD25}. In particular, if
\[
K = \mathbb P^2\setminus\bigl(\operatorname{Att}_W(B)\cup\operatorname{Rep}_W(S)\bigr),
\]
then the proof of \cite[Proposition~8.3]{AD25} gives the invariant splitting, and
\cite[Proposition~8.8]{AD25} gives the invariant foliations denoted there by
\[
\mathcal F_W^{--},\qquad \mathcal F_W^{-},\qquad
\mathcal F_W^{++},\qquad \mathcal F_W^{+}.
\]
The affine structure is then obtained as in \cite[Section~9]{AD25}; in particular, every non-simply-connected leaf is a topological annulus. The proof of structural stability in \cite[Theorem~10.1 and
Sections~10.1--10.3]{AD25} uses these sink and source product structures, the invariant foliations
and the affine leaf geometry; no further property of the Euclidean norm is used.

It remains to verify that the preceding structures persist under sufficiently small degree-$d$
perturbations. Fix the potential $H$; its strict projective Levi positivity is independent of the
foliation. If homogeneous representatives $V_n\to V$ in coefficients, then $H_t$, $H_{tt}$ and
$H_{t\bar t}$ converge uniformly on the unit sphere. Let
\[
\Sigma = \{z\in\partial\mathbb B:H_t^V(z) = 0\},
\qquad
\Gamma = H_{t\bar t}^V-|H_{tt}^V|.
\]
The set $\Sigma$ is compact and $\Gamma$ has a positive minimum on it. Choose a neighbourhood $U$ of
$\Sigma$ in the unit sphere on which $\Gamma$ remains positive. On the compact complement of $U$,
$|H_t^V|$ has a positive minimum. Uniform $C^2$ convergence therefore prevents new critical points
outside $U$ and preserves the positive Hessian gap inside $U$. Moreover, the invertibility of the
real leafwise differential of $H_t$ along $B$ and the implicit-function theorem show that the
non-empty transverse critical surface $B$ persists for all sufficiently small perturbations. Hence
$P_B(H)$ is open. Hyperbolicity of the projective singularities, the source property $P_S(H)$, and
non-vanishing of the homogeneous representative on $\mathbb C^3\setminus\{0\}$ are open as well.

The zero divergence hypothesis does not restrict the nearby projective foliations. If $\widehat V$
is homogeneous of degree $d$, set
\begin{equation}\label{eq:df}
\widehat V^{\mathrm{df}} = \widehat V-\frac{\operatorname{div}\widehat V}{d+2}R.
\end{equation}
If $P$ is homogeneous of degree $d-1$, then
\[
\operatorname{div}(PR) = (d+2)P.
\]
Thus $\widehat V^{\mathrm{df}}$ is divergence-free and induces the same projective foliation; this
is the continuous linear slice used in \cite[Lemma~4.1]{AD25}. After shrinking the neighbourhood,
the constructions and the conclusions of \cite[Sections~5--11]{AD25} used above therefore remain
valid for every nearby foliation. This proves the theorem.
\end{proof}

We shall also use two consequences of the same argument. First, the proof of
\cite[Proposition~7.1]{AD25}, with \eqref{eq:disc-basin} and \eqref{eq:normalgrowth} in place of the
corresponding Euclidean estimates, gives
\begin{equation}\label{eq:FatouD}
\mathrm F(\mathcal F) = D = \operatorname{Att}_W(B).
\end{equation}
Second, the proof of \cite[Proposition~11.5]{AD25} applies without change once its two
metric-dependent inputs are replaced by \eqref{eq:disc-basin} and the transverse expansion
\eqref{eq:normalgrowth}. Its characteristic-class and transverse-torus arguments do not use the
Euclidean norm. Consequently, for $d \ge 2$, $B$ is connected, has degree $1-d$, and
\begin{equation}\label{eq:genusB}
g(B) = \frac{d(d+1)}2.
\end{equation}

\section{Application to the Jouanolou family}

As mentioned earlier, the structural stability of $\mathcal J_1$ is verified by Deroin in \cite{Der26} using the
degree-one linear criterion. We therefore only have to consider $d \geq 2$.

\begin{proof}[Proof of Theorem~\ref{thm:structural-stability}]
For $J_d = (y^d,z^d,x^d)$, the homogeneous vector field has zero divergence and is non-zero away
from
the origin. Choose $\varepsilon > 0$ and then $\eta > 0$ as in Sections~4--5, and let
$H = H_{\varepsilon,\eta}$, with $p = d+1$. Sections~4--6 verify $P_B(H)$, strict projective Levi
positivity and $P_S(H)$. Moreover, the formula for $H$ gives $H_t(e_1) = 0$, so the critical set $B$
is non-empty. Theorem~\ref{thm:gauge-criterion} therefore gives structural stability in
$\operatorname{Fol}(\mathbb P^2,d)$. For $d = 2$, this recovers the conclusion of \cite{AD25}, while
for $d \geq 3$ it gives the higher-degree conclusion.
\end{proof}

\begin{proof}[Proof of Theorem~\ref{thm:dynamics-near-jd}]
After shrinking the neighbourhood given by Theorem~\ref{thm:structural-stability}, Section~7 and the
corresponding results of Alvarez--Deroin apply to every foliation in it. Equations~\eqref{eq:FatouD}
and \eqref{eq:genusB} give part~(1). A leaf outside the Fatou domain misses the non-empty open Fatou
set and therefore cannot be dense, while a leaf inside it is one fibre of the disc bundle and misses
every sufficiently small open set lying over a different point of the base. This proves part~(2).

By \cite[Theorem~9.5]{AD25}, all but countably many regular leaves are simply connected, while the remaining leaves are annuli with hyperbolic holonomy. Lins Neto
\cite{Lin00} proved hyperbolicity when the singularities are non-degenerate and the tangent line
bundle admits a metric of negative curvature. Here $T\mathcal F\simeq\mathcal O(1-d)$, which is
negative for $d \geq 2$. The singularities of $\mathcal J_d$ are non-degenerate, and this property
persists under sufficiently small perturbations. Hence every simply connected regular leaf in a
sufficiently small neighbourhood is biholomorphic to $\mathbb D$. This proves part~(3).
\end{proof}

\subsection{Transverse perfectness of the Julia set}

We now prove Theorem~\ref{thm:transverse-perfect}. The argument uses only the properties established
in Section~7: the identification \eqref{eq:FatouD} of the Fatou set with the sink basin, the
holomorphic transverse projection in \eqref{eq:disc-basin}, the normal expansion from
\eqref{eq:fgrowth} and \eqref{eq:normalgrowth}, and the genus computation \eqref{eq:genusB}. As in the proof of Alvarez--Deroin \cite[Proposition~11.5]{AD25}, normal expansion forces the derivative of the basin projection to blow up when the evaluation point approaches the boundary of its domain. Here this is quantified using the first-entry time into the sink neighbourhood $U_{B}$ and combined with removability of punctures for maps into the compact hyperbolic curve $B$.


\begin{lem}\label{lem:puncture-removable}
Let $\Sigma$ be a compact Riemann surface of genus at least two. Every holomorphic map
\[
g:\mathbb{D}^* = \mathbb{D}\setminus\{0\}\longrightarrow\Sigma
\]
extends holomorphically across $0$.
\end{lem}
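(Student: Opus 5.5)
The plan is to use the classical great Picard / big Picard-type removability for hyperbolic targets, proved via the Kobayashi (Poincaré) metric. Since $\Sigma$ has genus at least two, its universal cover is $\mathbb D$. The Poincaré metric descends to a complete metric $\lambda_\Sigma$ of constant curvature $-1$ on $\Sigma$. By the Schwarz--Pick lemma, every holomorphic map $g:\mathbb D^*\to\Sigma$ is distance non-increasing from the Poincaré metric of $\mathbb D^*$ to $\lambda_\Sigma$. The Poincaré metric of $\mathbb D^*$ is
\[
\frac{|dz|}{|z|\,\log(1/|z|)}.
\]

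First, estimate the image of small circles. The circle $\{|z| = r\}$ has hyperbolic length $2\pi/\log(1/r)$ in $\mathbb D^*$, which tends to $0$ as $r\to 0$. Hence $g(\{|z| = r\})$ is a closed curve in $\Sigma$ whose length tends to $0$. Compactness gives a positive injectivity radius $\iota$ of $(\Sigma,\lambda_\Sigma)$. So for small $r$ the loop $g(\{|z|=r\})$ lies in an embedded geodesic disc of radius below $\iota$. In particular, it is null-homotopic in $\Sigma$.

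Second, lift the map. Because the loops are contractible, the restriction of $g$ to a punctured disc $\mathbb D^*_{r_0}$ induces the trivial map on $\pi_1$. It therefore lifts to a holomorphic map $\tilde g:\mathbb D^*_{r_0}\to\mathbb D$ into the universal cover. A bounded holomorphic function has a removable singularity by Riemann's theorem, so $\tilde g$ extends holomorphically to $\mathbb D_{r_0}$ with values in $\overline{\mathbb D}$. By the maximum principle, the value at $0$ lies in the open disc $\mathbb D$ unless $\tilde g$ is a unimodular constant, which is impossible. Composing with the covering projection then gives the holomorphic extension of $g$ across $0$.

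The only delicate step is the lifting. I would justify it by noting that the circles $|z|=r$, for $r<r_0$, are all freely homotopic to one another in $\mathbb D^*_{r_0}$. Since one of them (indeed all of them, for small $r$) maps to a null-homotopic loop, the induced map $\pi_1(\mathbb D^*_{r_0})\cong\mathbb Z\to\pi_1(\Sigma)$ is trivial. The lifting criterion then applies.

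As an alternative that avoids lifting, one can argue directly. The images of small circles shrink to points, and by the same length bound the images of annuli $\{r^2\le|z|\le r\}$ have diameter $O(1/\log(1/r))$. So $g$ has a limit as $z\to 0$, and continuity together with Riemann's theorem in a local chart gives holomorphy.
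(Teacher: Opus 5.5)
Your main argument is correct and is essentially the paper's proof. The paper lifts $g\circ E$ to $G:\mathbb H\to\mathbb D$ and combines Schwarz--Pick with the positive translation length of non-trivial deck transformations to show the monodromy is trivial; your short-loop and injectivity-radius step is the same mechanism in different language, followed by the same Riemann removable-singularity and maximum-principle conclusion.

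You should drop the closing ``alternative'', however. The two boundary circles of $\{r^2\le|z|\le r\}$ are at Poincar\'e distance exactly $\log 2$ in $\mathbb D^*$, so the length bound gives only an $O(1)$ bound on the diameter of its image, not $O(1/\log(1/r))$. Shrinking of the circle images alone therefore cannot rule out $g$ drifting as $z\to 0$; that is precisely what your lifting step supplies.
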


\begin{proof}
Let $q:\mathbb{D}\to\Sigma$ be the universal covering and let
\[
E:\mathbb H\longrightarrow\mathbb{D}^*,\qquad E(\zeta) = \exp(2\pi i\zeta),
\]
be the universal covering of the punctured disc. Lift $g\circ E$ to a holomorphic map
$G:\mathbb H\to\mathbb{D}$. There is a deck transformation $\gamma$ of $q$ such that
\[
G(\zeta+1) = \gamma G(\zeta).
\]
By Schwarz--Pick,
\[
d_\mathbb{D}\bigl(G(iy),\gamma G(iy)\bigr)
 = d_\mathbb{D}\bigl(G(iy),G(iy+1)\bigr)
 \leq d_{\mathbb H}(iy,iy+1)\longrightarrow0
\]
as $y\to+\infty$. The deck group of the universal cover of a compact hyperbolic surface is
torsion-free and cocompact. Hence every non-identity deck transformation has strictly positive
translation length. It follows that $\gamma$ is the identity.

Thus $G$ is $1$-periodic and descends to a bounded holomorphic map $h:\mathbb{D}^*\to\mathbb{D}$ with
$g = q\circ h$. By the removable-singularity theorem, $h$ extends holomorphically to $\mathbb{D}$. Its
value at $0$ lies in $\mathbb{D}$: if the extended value had modulus one, the maximum principle would
force the extension to be constant of modulus one, contrary to $h(\mathbb{D}^*)\subset\mathbb{D}$. Therefore
$q\circ h$ gives the required holomorphic extension of $g$.
\end{proof}

\begin{proof}[Proof of Theorem~\ref{thm:transverse-perfect}]
Fix $d \geq 2$ and let
\[
D = \mathrm F(\mathcal J_d),\quad \text{and} \ \
J = \mathrm J(\mathcal J_d) = \mathbb P^2\setminus D.
\]
By \eqref{eq:FatouD} and \eqref{eq:disc-basin}, $D$ is exactly the forward basin of the compact
transverse sink surface $B$, and the basin projection $\Pi:D\longrightarrow B$ is holomorphic for the transverse complex structures. By \eqref{eq:genusB}, $B$ is connected and
\[
g(B) = \frac{d(d+1)}2 \geq 3.
\]
Choose the closed sink plaque bundle $U_B\Subset D$ used in Section~7, with boundary transverse to
$W$.

Suppose, if possible, that a regular point $p\in J$ is isolated in $J$ on a sufficiently
small embedded holomorphic transversal. After shrinking the transversal, choose a holomorphic
parametrization
\[
i:\Delta\longrightarrow\mathbb P^2,
\]
such that $i(0) = p$, and is transverse to the foliation on $\Delta$, is disjoint from $U_B$, and satisfies $i(\Delta^*)\subset D$. Define
\[
F = \Pi\circ i:\Delta^*\longrightarrow B.
\]

For $z\neq0$, let $\tau(z) > 0$ be the first forward time at which $\Phi_W^{\tau(z)}(i(z))$ reaches
$\partial U_B$. This time is finite because $i(z)\in D$ and $D$ is the forward basin of $B$. We
claim that
\[
\lim_{z \to 0}{\tau(z)} = +\infty.
\]
Otherwise, there would be a sequence $z_n\to0$ for which $\tau(z_n)$ is bounded. Passing to a
subsequence, let $\tau(z_n)\to t_* \geq 0$. Since $\partial U_B$ is closed and the flow is
continuous,
\[
\Phi_W^{t_*}(p)\in\partial U_B\subset D.
\]
The basin $D$ is invariant under every finite flow time, so this would imply $p\in D$, contradicting
$p\in J$.

We next estimate the transverse derivative of $F$. On the regular locus let $N\mathcal J_d$ denote
the complex normal line of the foliation, equipped with the Hermitian norm used in
\eqref{eq:normalgrowth}. Since $\Pi$ is a transverse holomorphic submersion, its differential
induces a complex-linear isomorphism
\[
\overline{D\Pi}_q:N\mathcal J_{d,q}\longrightarrow T_{\Pi(q)}B,
\]
where $q\in D$. Fix a smooth Hermitian metric on $B$. Compactness of $\partial U_B$ gives $m_d > 0$ such that the
conorm of $\overline{D\Pi}_q$ is at least $m_d$ for every $q\in\partial U_B$. Since $i$ is a fixed
transverse holomorphic disc, after shrinking $\Delta$ there is $a_d > 0$ such that
\[
\|[Di_z(v)]\|_\perp \geq a_d|v|
\]
for every $z\in\Delta$ and $v\in T_z\Delta$.

For the orbit segment from $i(z)$ to its first entry point in $U_B$, the orbit stays outside
$\operatorname{Int}U_B$. Hence \eqref{eq:fgrowth} and \eqref{eq:normalgrowth} give a constant
$c_d > 0$ such that
\[
\bigl\|[D\Phi_W^{\tau(z)}Di_z(v)]\bigr\|_\perp
 \geq e^{c_d\tau(z)}\|[Di_z(v)]\|_\perp.
\]
Because $\Pi$ is constant along the leaves,
\[
\Pi\circ\Phi_W^t = \Pi
\]
whenever both sides are defined in $D$. For each fixed $z\neq0$, we differentiate this identity at
the fixed time $t = \tau(z)$; no derivative of $\tau$ is involved. Combining the preceding estimates
gives
\begin{equation}\label{eq:Julia-derivative-growth}
\|DF_z\| \geq m_da_d e^{c_d\tau(z)}\longrightarrow+\infty
\quad \text{as} \ \ z\to0.
\end{equation}
Equivalently, if one differentiates the variable-time entry map, the additional term is proportional
to $W$ and disappears in the normal quotient and under $D\Pi$.

On the other hand, $B$ is a compact Riemann surface of genus at least two.
Lemma~\ref{lem:puncture-removable} therefore extends $F$ holomorphically across $0$. The derivative
of the extended map is bounded on a smaller closed disc with respect to the fixed metrics,
contradicting \eqref{eq:Julia-derivative-growth}. Hence $p$ cannot be isolated in the Julia set on a
local holomorphic transversal.

Finally, $J$ is closed because $D$ is open. Thus its intersection with an open local transversal is
relatively closed, and the first part shows that it has no isolated points. This proves the theorem.
\end{proof}

\begin{rem}\label{rem:perfectness-perturbations}
The proof uses only the disc-basin Fatou fibration, hyperbolicity of its compact base, and the
normal-expansion estimate up to first entry. These properties persist for the sufficiently small
degree $d$ perturbations covered by Theorem~\ref{thm:dynamics-near-jd} and Section~7. After
shrinking the perturbation neighbourhood if necessary, the same argument therefore gives transverse
perfectness at every regular Julia point for those nearby foliations as well.
\end{rem}

\section{The Jouanolou-type property}

Deroin asserts that $\mathcal J_d$ is of Jouanolou type for every $d \geq 1$
\cite[Conjecture~4.6]{Der26}. We prove this all-degree conclusion here. 

\begin{defn}\label{def:jouanolou-type}
A holomorphic foliation $\mathcal F$ on a compact complex algebraic surface is said to be of
\emph{Jouanolou type} if:
\begin{enumerate}
\item every singularity of $\mathcal F$ is hyperbolic;
\item there exists a Hermitian metric on the normal bundle for which the infinitesimal transverse
distance, restricted along the leaves, has local maxima at its critical points;
\item no leaf of $\mathcal F$ is biholomorphic to an elliptic curve.
\end{enumerate}
\end{defn}

\begin{thm}\label{thm:jouanolou-type}
For every integer $d \geq 1$, the Jouanolou foliation $\mathcal J_d$ is of Jouanolou type.
\end{thm}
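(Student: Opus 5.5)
We check the three conditions of Definition~\ref{def:jouanolou-type} separately, splitting into the cases $d\geq 2$ and $d=1$.

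\textbf{Case $d\geq 2$.} Condition (1) is the computation of Subsection~6.3. By \eqref{eq:singularities}, all singularities are conjugate to $[1:1:1]$ under the diagonal symmetries $D_a$. At $[1:1:1]$ the eigenvalues \eqref{eq:eigenvalues} have non-real quotient, since $\mu_+/\mu_-$ is real only if $\sqrt3 d=0$.

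Condition (2) is Theorem~\ref{thm:gauge}(iii). The Hermitian normal metric induced by $N=H_{\varepsilon,\eta}^{1/(d+1)}$ has the following property by \eqref{eq:inf-distance-core}: along a Bott-parallel normal vector, the log of the infinitesimal transverse distance equals a constant plus $\frac{d+2}{p}f$. Its leafwise critical points are therefore exactly the zeros of $H_t$, and at each of these, Theorem~\ref{thm:gauge}(i) gives a strict local maximum.

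Condition (3) follows from Theorem~\ref{thm:dynamics-near-jd}(3) together with the annulus statement in Section~7. A regular leaf is either biholomorphic to $\mathbb D$, or a topological annulus (hyperbolic, by Lins Neto's theorem as quoted). Neither is compact, so no leaf is an elliptic curve. A cleaner route: an elliptic leaf would be a compact invariant curve, hence algebraic, and $\mathcal J_d$ has no invariant algebraic curves for $d\geq2$ (Jouanolou). I would cite this directly.

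\textbf{Case $d=1$.} Here $J_1=(y,z,x)$ is linear, and cyclic permutation has eigenvalues the cube roots of unity $1,\omega,\omega^2$. The singularities are the three eigen-directions. At the one with eigenvalue $\lambda_k$, the local eigenvalues are $\lambda_j-\lambda_k$, and the ratio $(\omega-1)/(\omega^2-1)=-\omega^2$ is non-real, so (1) holds.

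For (2), I would diagonalize: $J_1$ is linearly conjugate to $\mathrm{diag}(1,\omega,\omega^2)$ by a unitary change of coordinates (the discrete Fourier matrix). I would take the Euclidean potential $H=S$, with $p=2$, $N=\|z\|$ and normal metric $\mathcal O(3)$. This $H$ is invariant under the unitary conjugation. In the diagonal coordinates, $H_t=\sum \lambda_j|w_j|^2$, $H_{t\bar t}=\sum|\lambda_j|^2|w_j|^2=S$, and $H_{tt}=\sum\lambda_j^2|w_j|^2$.

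At a critical point, consider $|w_j|^2$ as weights $c_j\geq 0$ summing to $1$ with $\sum c_j\lambda_j=0$. Since the $\lambda_j$ form an equilateral triangle, this forces $c_j=1/3$. Then $H_{tt}=\frac13\sum\lambda_j^2=0<1$, so the inequality is strict. Hence every critical point is a strict local maximum of the transverse distance, by the same identity \eqref{eq:inf-distance-core}, which holds because $\operatorname{div}J_1=0$ and the Bott-parallel computation of Subsection~6.2 is valid for $d=1$ as well.

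For (3), the leaves of a linear foliation with these eigenvalues are the three invariant lines minus singular points, each $\mathbb C^*$, together with the generic orbits of $t\mapsto(e^{t}a,e^{\omega t}b,e^{\omega^2 t}c)$. The generic orbits are injective images of $\mathbb C$, since the non-real ratios prevent periodicity. None of these leaves is compact, so none is an elliptic curve.

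\textbf{Main obstacle.} The step needing most care is (2), specifically making sure that ``local maxima at its critical points'' in the definition matches the leafwise Hessian criterion. The passage from $H_{t\bar t}>|H_{tt}|$ to a strict maximum is exactly the computation recorded after \eqref{eq:inf-distance-core}. For $d=1$, the main check is that the critical set is the explicit torus $|w_j|^2=1/3$ and the Hessian inequality holds there.
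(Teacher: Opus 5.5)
Your proposal is correct and follows the paper's proof essentially step by step. For $d\geq 2$ you use Proposition~\ref{prop:PS}, Theorem~\ref{thm:gauge} with \eqref{eq:inf-distance-core}, and the Chow/no-invariant-algebraic-curve argument of Lemma~\ref{lem:no-elliptic-leaf}; for $d=1$ you use the Euclidean potential with $H_{tt}=0$ at critical points, non-real eigenvalue ratios, and the period argument for the linear flow. Your barycentric argument for $d=1$ is equivalent to the paper's identity $(H_1)_{tt}=\overline{(H_1)_t}$, since $\lambda_j^2=\overline{\lambda_j}$ for cube roots of unity. One harmless slip: $(\omega-1)/(\omega^2-1)=1/(1+\omega)=-\omega$, not $-\omega^2$ (the reciprocal ratio is $-\omega^2$), and it is non-real either way.
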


\subsection{The linear case \texorpdfstring{$d = 1$}{d = 1}}

The endpoint $d = 1$ does not require the higher-degree curvature argument. In fact, the
distinguished exponent $p = d+1$ is then $2$, and the adapted potential collapses to a positive
constant multiple of the Euclidean quadratic potential. We record a direct verification because the
linear endpoint behaves somewhat differently from the higher-degree case.

\begin{prop}\label{prop:d1-jouanolou}
The linear Jouanolou foliation $\mathcal J_1$ is of Jouanolou type.
\end{prop}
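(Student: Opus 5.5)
The plan is to verify the three conditions of Definition~\ref{def:jouanolou-type} directly for the linear field $J_1=(y,z,x)$, using the Euclidean potential $S$. By Remark~\ref{rem:gauge-dependence} this is the case $p=d+1=2$ of our potential up to a positive constant, and no regularization is needed. The main tool is that $J_1(z)=Pz$, where $P$ is the cyclic permutation matrix. This matrix is unitary, diagonalizable with eigenvalues $1,\omega,\omega^2$ (with $\omega=e^{2\pi i/3}$), and has eigenvectors $v_k=(1,\omega^k,\omega^{2k})$.

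\emph{Step 1 (hyperbolicity).} The projective singularities are the eigenlines $[v_k]$, $k=0,1,2$. At the singularity with eigenvalue $\lambda_k=\omega^k$, the local eigenvalues of the induced field are $\lambda_j-\lambda_k$ for $j\ne k$. At $[1:1:1]$ these are $\omega-1$ and $\omega^2-1$, whose quotient is $1+\omega=-\omega^2\notin\mathbb R$. The same quotient (up to inversion and rotation by cube roots of unity) occurs at the other two points, either by direct computation or via the symmetries $D_a$ used in Proposition~\ref{prop:PS}. So all three singularities are hyperbolic. This is also the computation \eqref{eq:eigenvalues} specialized to $d=1$.

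\emph{Step 2 (local maxima of the transverse distance).} Along an integral curve we have $z'=Pz$ and $z''=P^2z$. This gives
\[
S_t=\langle Pz,z\rangle,\qquad S_{tt}=\langle P^2z,z\rangle,\qquad S_{t\bar t}=\|Pz\|^2=S,
\]
where $\langle u,v\rangle=\sum_i u_i\overline{v_i}$. Since $P^2$ is unitary, Cauchy--Schwarz gives $|S_{tt}|\le S=S_{t\bar t}$. Equality holds only if $P^2z\in\mathbb Cz$, that is, only if $z$ is an eigenvector of $P$. At such a point we have $S_t=\overline{\lambda}S\neq0$, as in \eqref{eq:sing-rho}, so the point is not critical. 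Hence $P_B(S)$ holds strictly at every critical point.

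Next, $\operatorname{div}J_1=0$, and the construction of Subsection~\ref{subsec:normal-metric-construction} uses only zero divergence and homogeneity. It therefore applies verbatim with $d=1$ and $N=S^{1/2}$, producing a Hermitian metric on $N\mathcal J_1\simeq\mathcal O(3)$. By \eqref{eq:inf-distance-core}, the logarithm of the infinitesimal transverse distance equals a constant plus $\tfrac32 f$, where $f=-\log S$. The strict inequality $S_{t\bar t}>|S_{tt}|$ then says that every critical point of the transverse distance is a strict local maximum. This gives condition (2).

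\emph{Step 3 (no elliptic leaves).} This is the step requiring the most care, though it is elementary. In eigencoordinates $u$ with $z=\sum u_kv_k$, the flow is $u_k(t)=e^{\omega^kt}u_k(0)$. I will split into cases according to how many coordinates $u_k$ vanish.

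\emph{Case of all $u_k\ne0$.} The leaf is the image of $t\mapsto[e^{t}u_0:e^{\omega t}u_1:e^{\omega^2t}u_2]$. This map has trivial stabilizer: a period $t$ would satisfy $(1-\omega)t\in2\pi i\mathbb Z$ and $(1-\omega^2)t\in2\pi i\mathbb Z$. The ratio of these two numbers is non-real, so both integers must vanish and $t=0$. The map is an injective immersion, so the leaf is biholomorphic to $\mathbb C$.

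\emph{Case of exactly one $u_k=0$.} The leaf lies in an invariant line minus two singular points, and is an orbit $\mathbb C/(2\pi i/(\omega^j-\omega^l))\mathbb Z\cong\mathbb C^*$.

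In both cases no leaf is compact, and in particular none is an elliptic curve, which is condition (3). Combining Steps 1--3 proves the proposition. The only delicate point is ensuring that the generic orbit map has no nontrivial period, and this reduces exactly to the non-reality of the eigenvalue ratio already found in Step 1.
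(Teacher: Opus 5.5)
Your proposal is correct and follows the paper's proof essentially step by step. You use the Euclidean potential $S$ with the Bott-parallel normal metric of Subsection~\ref{subsec:normal-metric-construction} for condition (2), the non-real eigenvalue ratios at the three eigenlines of $P$ for condition (1), and the non-reality of $(\omega^2-1)/(\omega-1)$ to rule out periods of generic orbits for condition (3). The differences are cosmetic: the paper obtains the strict Hessian inequality from the identity $S_{tt}=\langle z,Pz\rangle=\overline{S_t}$, so that $S_{tt}=0$ at critical points, rather than from your strict Cauchy--Schwarz argument; and at an eigenvector one has $S_t=\lambda S$ rather than $\overline{\lambda}S$, which does not affect your conclusion.
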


\begin{proof}
Consider the linear map $P : \mathbb C^3 \to \mathbb C^3$ given by $P(z_1,z_2,z_3) = (z_2,z_3,z_1)$.
The degree one Jouanolou foliation is given by the vector field $J_1(z) = P(z)$. Here, we have
\[
H_1(z) = S(z) = |z_1|^2+|z_2|^2+|z_3|^2,
\quad \text{and} \ \ N_1 = H_1^{1/2}.
\]
Equivalently, the regularized expression used for $d \geq 2$ becomes
\[
H_{\varepsilon,\eta} = (1+3\eta+\varepsilon)S
\]
when $d = 1$, so no coordinate-direction repair is needed.

\noindent We first check the metric condition. Use the standard Hermitian product
$\langle u,v\rangle = \sum_ju_j\overline{v_j}$. Along a complex integral curve $z'(t) = P(z(t))$,
\[
(H_1)_t = \langle Pz,z\rangle,
\qquad
(H_1)_{t\bar t} = \langle Pz,Pz\rangle = H_1,
\quad \text{and} \ \
(H_1)_{tt} = \langle P^2z,z\rangle.
\]
Since $P$ is unitary and $P^* = P^2$,
\[
(H_1)_{tt} = \langle z,Pz\rangle
 = \overline{\langle Pz,z\rangle}
 = \overline{(H_1)_t}.
\]
Hence at every regular critical point of $H_1$ along a leaf, that is, $(H_1)_t = 0$ gives
\[
(H_1)_{tt} = 0,
\quad \text{and} \ \
(H_1)_{t\bar t} = H_1 > 0.
\]
Thus the Hessian inequality is strict with the maximal possible gap. Since $f_1 = -\log H_1$, these
critical points are strict local maxima of $f_1$. The norm $N_1$ gives the standard Hermitian metric
on $\mathcal O(-1)$ and therefore a Hermitian metric on
$N\mathcal J_1\simeq\mathcal O(3)$. The Bott-homogeneity computation underlying
\eqref{eq:inf-distance-core} is algebraic and remains valid at $d = 1$; for a non-zero Bott-parallel
normal vector $\xi(t)$ it gives
\[
\log\|\xi(t)\|_\perp
 = C-3\log N_1(z(t))
 = C+\frac32 f_1(z(t)).
\]
Consequently the critical points of the infinitesimal transverse distance are precisely the above
critical points and are strict local maxima. This is condition~(2) of
Definition~\ref{def:jouanolou-type}.

\noindent For condition~(1), the projective singularities are the three eigenlines of $P$. At
$[1:1:1]$, the local field is
\[
X = (z-y^2)\frac{\partial}{\partial y}+(1-zy)\frac{\partial}{\partial z},
\]
whose linearization is
\[
DX(1,1) = \begin{pmatrix}
-2&1\\
-1&-1
\end{pmatrix}.
\]
Its eigenvalues are $-\frac32\pm i\frac{\sqrt3}{2}$, so their quotient is non-real. For each cube
root of unity $a$, the diagonal map
\[
D_a = \operatorname{diag}(1,a,a^{-1})
\]
satisfies $P(D_a z) = aD_aP(z)$ and sends $[1:1:1]$ to one of the three eigenlines of $P$. Hence
these
projective symmetries carry the above local model to the other two singularities, which are
therefore hyperbolic as well. We also record the source property needed for structural stability.
At $[1:1:1]$ the homogeneous relation is $Pz = z$, so \eqref{eq:sing-rho} with $p = 2$ and $d = 1$
gives
$\rho = -2$. The linearization of the real field is the real part of $-2X_{\text{lin}}$, where $X_{\text{lin}}$ is the linear part of $X$ at $[1:1:1]$; its two
complex
eigenvalues have real part $3 > 0$. Thus $[1:1:1]$ is a source for $W$. The source property is
preserved by these symmetries as well. Indeed,
\[
(H_1)_t(D_a z) = a\,(H_1)_t(z),
\qquad
P(D_a z) = aD_aP(z),
\]
and $|a| = 1$. Formula~\eqref{eq:rho} therefore gives
\[
\rho(D_a z)\,P(D_a z) = D_a\bigl(\rho(z)P(z)\bigr).
\]
Thus, $D_a$ conjugates the real field $W$, not only the projective complex foliation. The other two
singularities are consequently sources as well. Hence $(J_1,H_1)$ satisfies $P_S(H_1)$.

\noindent It remains to show that it excludes elliptic leaves. Let $\omega = e^{2\pi i/3}$. In an
eigenbasis $(u,v,w)$ for $P$,
\[
P = \operatorname{diag}(1,\omega,\omega^2).
\]
On the affine chart $u\neq0$, with $x = v/u$ and $y = w/u$, the induced foliation is generated by
\[
\dot x = (\omega-1)x,
\qquad
\dot y = (\omega^2-1)y.
\]
If $x_0y_0\neq0$, the leaf through $(x_0,y_0)$ is parametrized by
\[
t\longmapsto
\bigl(x_0e^{(\omega-1)t},\,
      y_0e^{(\omega^2-1)t}\bigr).
\]
A non-zero period would give integers $m,n$ with $(\omega-1)t = 2\pi i m$, and
$(\omega^2-1)t = 2\pi i n$. For $t \neq 0$, this would force
\[
\frac{\omega^2-1}{\omega-1} = \frac{n}{m}\in\mathbb R.
\]
But, we know
\[
\frac{\omega^2-1}{\omega-1} = -\omega^2\notin\mathbb R.
\]
Hence, such a leaf has no non-zero period and is biholomorphic to $\mathbb C$. If exactly one of the
eigen-coordinates $u,v,w$ vanishes, the leaf lies in an invariant projective line; after deleting its two
singular endpoints it is biholomorphic to $\mathbb C^*$. Thus every regular leaf is biholomorphic
to $\mathbb C$ or $\mathbb C^*$, and in particular no leaf is elliptic. This proves condition~(3)
and the proposition.
\end{proof}

\subsection{The higher-degree case}

\begin{lem}\label{lem:no-elliptic-leaf}
For every $d \geq 2$, the foliation $\mathcal J_d$ has no leaf biholomorphic to an elliptic curve.
\end{lem}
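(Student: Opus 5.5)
We want to show that for $d \geq 2$ no leaf of $\mathcal J_d$ is biholomorphic to an elliptic curve. A leaf biholomorphic to an elliptic curve is a compact complex curve. We split into the singular-point issue and compactness.

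First, the leaf is a regular leaf, hence it contains no singular point. Its topological closure in $\mathbb P^2$ equals the leaf itself, since a compact leaf is closed.

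Second, a compact leaf is a compact immersed, in fact embedded, holomorphic curve in $\mathbb P^2$. By Chow's theorem it is an algebraic curve $C$ invariant by $\mathcal J_d$.

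Third, we invoke the classical theorem of Jouanolou cited in the introduction \cite{Jou79}: for $d \geq 2$, $\mathcal J_d$ has no invariant algebraic curve. This gives a contradiction immediately.

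If one prefers a self-contained argument avoiding \cite{Jou79}, there is an index-theoretic alternative. A smooth invariant curve $C$ containing no singularity satisfies $T\mathcal F|_C \simeq TC$. This gives $\deg(T\mathcal F|_C) = (1-d)\deg C = 2-2g(C)$. For an elliptic curve the right side is $0$, so $(1-d)\deg C = 0$, which is impossible for $d\geq 2$ and $\deg C \geq 1$.

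Smoothness of $C$ needs justification. A leaf is an injectively immersed curve; since it is compact and avoids $\operatorname{Sing}$, it is an embedded smooth curve. So the alternative argument is complete: Chow gives algebraicity, and the degree of $T\mathcal F\simeq\mathcal O(1-d)$ restricted to $C$ gives the contradiction.

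A second alternative uses Theorem~\ref{thm:dynamics-near-jd}(3): all but countably many regular leaves are discs, and the remaining ones are annuli or simply connected. This excludes elliptic leaves, since those have fundamental group $\mathbb Z^2$.

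The only delicate point is confirming that a compact leaf avoids singularities and is embedded. This is standard: leaves of a singular foliation are by definition leaves of the regular foliation on $\mathbb P^2\setminus\operatorname{Sing}$.

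I expect the main obstacle to be bookkeeping rather than mathematics. I would present the index-theoretic argument as the primary proof and cite Jouanolou and the annulus/disc dichotomy as confirmation.
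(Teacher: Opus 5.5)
Your steps one through three are exactly the paper's proof: an elliptic leaf is compact, Chow's theorem makes it an invariant algebraic curve, and Jouanolou's theorem excludes such curves. So the proposal is correct as it stands.

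The index-theoretic argument you want to make primary is a genuinely different route, and it is sound. A compact leaf is an injectively immersed compact curve, hence an embedded smooth curve $C$ missing $\operatorname{Sing}(\mathcal J_d)$. Then $T\mathcal F|_C\simeq TC$, which gives $2-2g(C)=(1-d)\deg C<0$.

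What your route buys is independence from Jouanolou's theorem, plus generality. It uses only $T\mathcal F\simeq\mathcal O(1-d)$, which the paper already invokes in Sections~7--8. It shows that every compact leaf of every degree-$d\geq 2$ foliation of $\mathbb P^2$ has genus at least two. Hence condition~(3) of Definition~\ref{def:jouanolou-type} is automatic in $\mathbb P^2$ and has nothing to do with the special structure of $\mathcal J_d$. The paper's route instead leans on a deeper, $\mathcal J_d$-specific theorem, which of course says much more: there is no invariant algebraic curve at all.

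Going one step further, the Camacho--Sad formula rules out compact leaves of any genus, for every foliation of $\mathbb P^2$ and every degree. An invariant curve avoiding the singular set would have $C\cdot C=0$, whereas $C\cdot C=(\deg C)^2>0$. Equivalently, the Bott connection would make $N\mathcal F|_C\simeq\mathcal O(d+2)|_C$ flat, hence of degree zero.

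Your third alternative should be dropped or clearly flagged. Theorem~\ref{thm:dynamics-near-jd}(3), as stated, says nothing about the countably many exceptional leaves. The fact that non-simply-connected leaves are annuli comes from Alvarez--Deroin's Theorem~9.5, as invoked in the proofs of Section~7 and Theorem~\ref{thm:dynamics-near-jd}. Using it means importing the entire structural-stability machinery to prove an elementary fact.
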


\begin{proof}
The Jouanolou foliation has no invariant algebraic curve; see Jouanolou \cite{Jou79} and Lins Neto
\cite{Lin88}. If a leaf $L$ were biholomorphic to an elliptic curve, then $L$ would be compact. The
inclusion of the leaf into $\mathbb P^2$ would therefore be a proper holomorphic immersion, so its
image would be a compact one-dimensional analytic subset invariant under $\mathcal J_d$. By Chow's
theorem, it would be algebraic, a contradiction.
\end{proof}

\begin{proof}[Proof of Theorem~\ref{thm:jouanolou-type}]
The degree-one case is Proposition~\ref{prop:d1-jouanolou}. We therefore assume $d \geq 2$. For
$J_d = (y^d,z^d,x^d)$, the homogeneous vector field has zero divergence and is non-zero away from
the
origin. Choose $\varepsilon > 0$ and then $\eta > 0$ as in Sections~4--5, and let
$H = H_{\varepsilon,\eta}$, with $p = d+1$, and $N = H^{1/p}$.

\noindent We verify the three conditions of Definition~\ref{def:jouanolou-type} in Deroin's order.

\begin{itemize}
\item Proposition~\ref{prop:PS} proves more than hyperbolicity: every projective singularity is
hyperbolic and is a source for the associated real field $W$. Thus condition~(1) holds.

\item $N^2$ defines a smooth Hermitian metric on $\mathcal O(-1)$ and hence, using
$N\mathcal J_d\simeq\mathcal O(d+2)$, a smooth Hermitian metric on the normal bundle. The potential
satisfies
$P_B(H)$ by Proposition~\ref{prop:eta}, equivalently by Theorem~\ref{thm:gauge}(i).
Equation~\eqref{eq:inf-distance-core} identifies the logarithm of infinitesimal transverse distance
for this normal metric with a positive multiple of $f = -\log H$, up to an additive constant.
Therefore every regular critical point of the infinitesimal transverse distance is a strict local
maximum. This is condition~(2).

\item Lemma~\ref{lem:no-elliptic-leaf} gives condition~(3).
\end{itemize}
Hence $\mathcal J_d$ is of Jouanolou type for every $d \geq 2$. Together with
Proposition~\ref{prop:d1-jouanolou}, this proves \cite[Conjecture~4.6]{Der26}.

\end{proof}

\section{Consequences and remaining problems}

Theorem~\ref{thm:structural-stability}, together with the degree-one verification of Deroin, gives structural stability for the Jouanolou foliation in every degree, while
Theorem~\ref{thm:jouanolou-type} proves the corresponding all-degree Jouanolou-type statement.
For $d\geq 2$, the same construction gives an open set in $\operatorname{Fol}(\mathbb P^2,d)$ in which the Fatou set of a foliation is
a connected disc bundle, no leaf is dense, and all but countably many regular leaves are
biholomorphic to $\mathbb D$. Here, the main higher-degree difficulty is the
construction of an adapted transverse metric. The singularities of $\mathcal J_d$ are already
explicitly hyperbolic, and the absence of elliptic leaves follows from the classical absence of
invariant algebraic curves. The role of the exponent $p=d+1$ is to make the critical-point
equation compatible with the cyclic structure of the Jouanolou field and thereby produce a
homogeneous potential satisfying the required $P_B$ condition. This also suggests that adapted
homogeneous potentials may be useful for other projective foliations for which the Euclidean
potential does not detect the required transverse dynamics.

Deroin conjectures that the Julia set of $\mathcal J_d$ is transversely a
Cantor set of measure zero \cite[Conjecture~4.2]{Der26}. Theorem~\ref{thm:transverse-perfect}
establishes one necessary part of this picture, that is, at every regular Julia point, the intersection with
a sufficiently small holomorphic transversal has no isolated point. Thus transverse perfectness
is proved, but neither total disconnectedness nor zero transverse measure follows from the present
argument.

\medskip
\noindent\textbf{Problem 10.1 (Transverse structure of the Julia set).}
For $d\geq 2$, is the Julia set of $\mathcal J_d$ transversely a Cantor set of zero measure?

\medskip

The Fatou quotient gives another natural problem. For every $d\geq 2$,
Theorem~\ref{thm:dynamics-near-jd} produces a compact Riemann surface $B_d$ such that
\[
\mathcal F(\mathcal J_d)\longrightarrow B_d
\]
is a smooth locally trivial disc bundle and $g(B_d)=\frac{d(d+1)}2$. For $d=2$, Alvarez--Deroin identify $B_2$ with the Klein quartic \cite{AD25}. For $d>2$,
the argument above determines the genus but does not identify the conformal type of $B_d$.
The symmetries of the Jouanolou foliation act on the Fatou domain and therefore induce
automorphisms of its leaf-space quotient, so they may provide additional information about this
Riemann surface.

\medskip
\noindent\textbf{Problem 10.2 (The higher-degree Fatou quotient).}
For $d>2$, determine the conformal type of the Riemann surface $B_d$ and its automorphism group. 

\medskip

Structural stability and the Jouanolou-type property do not determine the minimal sets of the
foliation. In particular, the fact that no leaf is dense does not exclude the existence of an
exceptional minimal set. The results of Camacho--Lins Neto--Sad \cite{CLS88},
Bonatti--Langevin--Moussu \cite{BLM92}, and Camacho--de Figueiredo \cite{CF01} provide the
relevant background, but the structural-stability mechanism used here does not by itself settle
this question for the Jouanolou family.

\medskip
\noindent\textbf{Problem 10.3 (Exceptional minimal sets).}
Does $\mathcal J_d$ admit an exceptional minimal set for some $d\geq 6$? 

\medskip

It is also natural to ask how much of the structural-stability mechanism survives in higher
dimension. Several parts of the argument above use in an essential way that a one-dimensional
foliation on $\mathbb P^2$ has a complex normal line bundle. In that setting, the transverse
expansion is a scalar estimate on $N\mathcal F$, and the critical set $B$ is a real surface
transverse to the foliation, hence carries the transverse geometry of a Riemann surface. For a
one-dimensional foliation on $\mathbb P^n$, $n>2$, the normal bundle has complex rank $n-1$.
The natural homogeneous form
\[
\iota_V\iota_R\Omega
\]
is then an $(n-1)$-form and controls the determinant of the normal dynamics rather than the
behaviour of each normal direction separately. Thus transverse volume expansion is not, by
itself, a substitute for the uniform expansion of the whole normal bundle which is available in
rank one. Moreover, a smooth critical set transverse to the foliation would have real dimension
$2n-2$, so the Riemann-surface arguments used for the Fatou quotient and for several global
parts of the Alvarez--Deroin construction would require higher-dimensional replacements.

On the other hand, well-behaved local singularities are not scarce in higher dimension. In
a separate work \cite{GN26}, the author along with Vi\^{e}t-Anh Nguy\^{e}n proved that foliations with only hyperbolic linearizable singularities are dense in the corresponding parameter spaces $\operatorname{Fol}(\mathbb P^n,d)$, $n \ge 2$. This suggests that the main
difficulty in extending the present mechanism is not merely the local nature of the singularities,
but rather the construction and control of a higher-rank transverse dynamics.

\medskip
\noindent\textbf{Problem 10.4 (Structural stability in $\mathbb P^n$).}
Is there an analogue of the Alvarez--Deroin structural-stability mechanism for one-dimensional
holomorphic foliations on $\mathbb P^n$, $n>2$? 

\section*{Acknowledgements}
The author acknowledges support from the DST INSPIRE Faculty research grant (DST/INSP-IRE/04/2024/003868) and the Research Initiation Grant (IITJ/R\&D/IGRC/2025-26/53) from IIT Jodhpur.

\section*{Declarations}

\noindent\textbf{Competing Interests:} The author has no competing interests to declare that are
relevant to the content of this article.

\medskip

\noindent\textbf{Data Availability:} Data sharing is not applicable to this article.

\end{document}